\newcommand{\ig}[2]{\vcenter{\xy (0,0)*{\includegraphics[scale=#1]{fig/#2}} \endxy}}
\newcommand{\igc}[2]{\begin{center} \includegraphics[scale=#1]{fig/#2} \end{center}}
\newtheorem{thm}{Theorem}[section]
\newtheorem{lemma}[thm]{Lemma}
\newtheorem{prop}[thm]{Proposition}
\newtheorem{cor}[thm]{Corollary}
\newtheorem{claim}[thm]{Claim}
\newtheorem*{prop*}{Proposition}
\theoremstyle{definition}
\newtheorem{defn}[thm]{Definition}
\newtheorem{example}[thm]{Example}
\newtheorem{exercise}[thm]{Exercise}
\theoremstyle{remark}
\newtheorem{remark}[thm]{Remark}
\newtheorem{question}[thm]{Question}
\numberwithin{equation}{section}
    \def\CM{{\mathbb{C}}}
  \def\hg{{\mathfrak h}}
  \def\mg{{\mathfrak m}}
    \def\QM{{\mathbb{Q}}}
    \def\RM{{\mathbb{R}}}
    \def\ZM{{\mathbb{Z}}}
    \def\BC{{\mathcal{B}}}
    \def\DC{{\mathcal{D}}}
  \def\eb{{\mathbf e}}
\def\HB{{\mathbf H}}
    \def\OC{{\mathcal{O}}}
    \def\PC{{\mathcal{P}}}
\def\a{\alpha}
\def\d{\delta}
\def\l{\lambda}
\let\phi=\varphi
\def\Z{{\mathbbm Z}}
\def\1{\mathbbm{1}}
\newcommand{\ot}{\otimes}
\newcommand{\pa}{\partial}
\newcommand{\co}{\colon}
\renewcommand{\to}{\rightarrow}
\renewcommand{\sl}{\mathfrak{sl}}
\newcommand{\Kar}{\textbf{Kar}}
\newcommand{\Hom}{{\rm Hom}}
\newcommand{\End}{{\rm End}}
\newcommand{\Ext}{{\rm Ext}}
\newcommand{\op}{{\rm op}}
\newcommand{\STL}{S\mathcal{T\!L}}
\newcommand{\un}{\underline}
\newcommand{\Zvv}{\ZM[v,v^{-1}]}
\title{Indecomposable Soergel bimodules for universal Coxeter groups}
\author{Ben Elias and Nicolas Libedinsky}
\begin{document}

\begin{abstract} We produce an explicit recursive formula which computes the idempotent projecting to any indecomposable Soergel bimodule for a universal Coxeter system. This gives the exact set of primes for which the positive characteristic analogue of Soergel's conjecture holds. Along the way, we introduce the multicolored Temperley-Lieb algebra.
\end{abstract}

\maketitle

\setcounter{tocdepth}{1}
\tableofcontents

\section{Introduction} 


Most of this paper is devoted to the introduction and the elementary representation theory of a certain 2-category, the \emph{multi-colored Temperley-Lieb 2-category}, defined in
\S\ref{sec:nTL}. It is a straightforward generalization of the more familiar Temperley-Lieb category appearing in the representation theory of $\sl_2$, though beyond the most natural case
of two colors, it seems not to have appeared previously in the literature. In earlier work of the first author \cite{EDihedral}, a connection was made between the two-colored Temperley-Lieb
2-category and the category of Soergel bimodules for the infinite dihedral group. The representation theory of the Temperley-Lieb algebra was then used to prove facts about these Soergel
bimodules, leading to answers for certain basic questions in positive characteristic Kazhdan-Lusztig theory. In this paper, we make a connection between the multi-colored Temperley-Lieb
2-category and the category of Soergel bimodules for universal Coxeter groups, and prove the analogous results in Kazhdan-Lusztig theory. The remainder of the introduction will fill in the
details.

\subsection{Kazhdan-Lusztig theory in positive characteristic}

In the year 1979, Kazhdan and Lusztig (abbreviated ``KL") introduced their celebrated \emph{KL polynomials} for any Coxeter system \cite{KaLu1}. These polynomials, living as
coefficients in the Iwahori-Hecke algebra, have become a fundamental tool in representation theory, geometry, and combinatorics. However, they are also a fundamental mystery. Despite
countless papers exploring the combinatorics of KL polynomials, very little is known outside of specific cases. The only infinite families of Coxeter groups for which we have a complete
understanding of KL polynomials are the dihedral groups (a simple exercise) and the universal Coxeter groups (a result of Dyer \cite{DyerUniversal}). Recall that universal Coxeter groups
are groups generated by involutions, with no other relations.

As we will discuss shortly, KL polynomials encode multiplicities attached to certain important categories (representation-theoretic, geometric, or otherwise) defined in characteristic
zero. For crystallographic Coxeter groups, one can choose an integral form of these categories, in order to define their characteristic $p$ analogs. One can encode the new
multiplicities in so-called \emph{$p$-KL polynomials}, which depend strongly on the specific value of $p$, and eventually (for $p$ large) agree with the ordinary KL polynomials. Far less
is known about the $p$-KL polynomials, nor is there a known algorithm to compute them within the Hecke algebra, as there is for ordinary KL polynomials. The following question is
already of great interest.

\begin{question} \label{q1} Given a crystallographic Coxeter group $W$ and a prime $p$, for which $w \in W$ does there exist a $y \le w$ such that the $p$-KL polynomial $h^p_{y,w} \in
\ZM[v,v^{-1}]$ disagrees with the ordinary KL polynomial $h_{y,w}$? \end{question}

For example, recent work of Williamson \cite{WilCounterexample} has found an infinite family of such quadruples $(W,p,w,y)$ in type $A$, refuting a well-known conjecture about Lusztig's
character formula. Answering this question for Weyl groups and affine Weyl groups would have significant import for modular representation theory (see \cite{Soe4}).

In order to make sense of the $p$-KL polynomial as encoding multiplicities, we must specify in which category we work. The name $p$-KL polynomial originally referred to
multiplicities in the category of \emph{parity sheaves} \cite{JMW}. Ultimately, we will use the diagrammatic category $\DC$ defined by the first author and Williamson in \cite{EWGR4SB},
though there are connections between $\DC$, parity sheaves, and the category of Soergel bimodules $\BC$ introduced by Soergel \cite{Soe5}. We shall motivate these connections in the
introduction, and work in \S\ref{sec:universal} entirely with $\DC$.

Question \ref{q1} then becomes a question about how ``big" the indecomposable objects in $\DC$ are. For the baby case of universal Coxeter groups, we construct the indecomposable objects
explicitly in the generic case, as the images of certain idempotents (already a new result in characteristic zero). By identifying the denominators in these idempotents, we determine which
finite characteristics will deviate from the generic behavior, thus answering Question \ref{q1}. Similar results for the other baby case, dihedral groups, can be easily extrapolated from
the first author's work \cite{EDihedral}. This is a small step along a very long and difficult road.

\subsection{Soergel bimodules and Soergel diagrammatics}

In 1992, Soergel \cite{Soe2} introduced an additive category $\BC = \BC(W,S,V,\Bbbk)$ of graded bimodules over a polynomial ring, whose objects have come to be known as \emph{Soergel
bimodules}. This category depends on a Coxeter system $(W,S)$, a field $\Bbbk$, and a finite dimensional representation $V$ of $W$ over $\Bbbk$ (see \cite{Soe5} for this general
definition). One important special case will be when $\Bbbk = \RM$ or $\CM$ and $V = V_{\mathrm{rootic}}$ is the \emph{rootic representation}\footnote{The rootic representation is called
the \emph{geometric representation} in Bourbaki \cite{bourbaki1968elements} or Humphreys \cite{Humphreys}. The notational preference is explained in \cite{LibLIF}.} of $(W,S)$. When $W$ is
crystallographic, its rootic representation can be defined over $\ZM$, and thus over any field.

The motivation for introducing $\BC$ is that, when $W$ is a Weyl group and $V$ its rootic representation in characteristic zero, there is an isomorphism between (a simplified version of)
$\BC$ and additive subcategories of the representation-theoretic and geometric categories which KL polynomials study. More precisely, this simplified version is equivalent to the
projective objects in the principal block of the BGG category $\OC$, or the semisimple $N$-equivariant perverse sheaves on the flag variety $G/B$. Here $G$ is a connected reductive complex algebraic group, $B$ denotes a Borel subgroup of $G$, and $N$ denotes the unipotent radical of $B$. One advantage of Soergel's approach is
that $\BC$ can be defined for any Coxeter group, even non-crystallographic groups for which there is no corresponding geometry or representation theory. Another advantage is that $\BC$
has a simple algebraic definition, allowing one to study KL theory using low-tech methods.

For our purposes, the relevant feature of $\BC$ (and, in a sense, of the other categories as well) is \emph{Soergel's categorification theorem} (\cite{Soe2},\cite{Soe5}),
which states that Soergel bimodules are a categorification of the Hecke algebra $\HB(W)$ of $W$. In other words, there is an isomorphism of $\Z[v,v^{-1}]$-algebras \begin{equation} \label{eq:catfn} \mathrm{ch}: [ \BC(W,S,V,\Bbbk) ] \longrightarrow \HB(W)\end{equation} from the split
Grothendieck group to the Hecke algebra. Soergel proved this result for any Coxeter group, and for any
representation $V$ which is \emph{reflection vector faithful} (see \cite{Soe5} for the definition) over a infinite field $\Bbbk$ of characteristic $\ne 2$.\footnote{Unfortunately,
$V_{\mathrm{rootic}}$ is not always reflection vector faithful. However, when $\Bbbk=\RM$, Soergel constructed an explicit reflection vector faithful representation analogous to
$V_{\mathrm{rootic}}$, and the second author \cite{LibRR} has given numerous theorems relating results for $V_{\mathrm{rootic}}$ to results for this explicit representation.} The
indecomposable objects $\{B_x\}_{x \in W}$ in $\BC$ are classified by elements of $W$, and they descend to some \emph{positive} basis $\{\mathrm{ch}[B_x]\}$ of the Hecke algebra (i.e. certain
coefficients are positive). The Hecke algebra possesses a natural (positive) basis, the KL basis $\{b_x\}_{w \in W}$ (encoded by the KL polynomials). This raises the following question.

\begin{question} \label{q2} Given $(W,S,V,\Bbbk)$ with $V$ reflection vector faithful, for which $w \in W$ will it be the case that $\mathrm{ch}[B_w] = b_w$? \end{question}

When $V$ is reflection vector faithful and $\Bbbk$ has characteristic $0$, it was conjectured by Soergel that every $w \in W$ has this property. When $W$ is a Weyl group, this conjecture
is equivalent to the famed Kazhdan-Lusztig conjecture, proven by Brylinski-Kashiwara \cite{brylinski1981kazhdan} and Beilinson-Bernstein \cite{beilinson1981localisation} using difficult
geometric techniques. Soergel hoped that the algebraic setting of $\BC$ would allow for a simpler solution. The baby case of the dihedral group was proven by Soergel \cite{Soe2}. The
universal Coxeter group case was done by Fiebig in \cite{fiebig2008combinatorics}, using the tool of moment graphs. It was also shown later by the second author in unpublished work, by
constructing idempotents using singular Soergel bimodules \cite{WilSSB}. The general case was recently proven by the first author and Williamson \cite{EWHodge} for $V_{\mathrm{rootic}}$
when $\Bbbk = \RM$; therefore, the KL basis $b_w$ really does encode something about characteristic 0 Soergel bimodules.

Soergel's categorification theorem implies that $\BC$ is the correct object to study in finite odd characteristic, so long as $V$ is reflection vector faithful. When $W$ is
a Weyl group, $V_{\textrm{rootic}}$ will be reflection vector faithful in characteristic $\ne 2$, and Question \ref{q1} is equivalent to Question \ref{q2}. However, an infinite
Coxeter group does not possess a faithful representation in positive characteristic, so that $\BC$ is not quite the correct category to study.

It is somewhat naive to assume that using the same definitions in characteristic $p$ will yield a category with similar properties. The most appropriate way to define a finite
characteristic analog of an additive (resp. abelian) $\RM$-linear category is to first choose an integral form. This involves finding a projective generator $P$ and a $\ZM$-algebra $E$
such that $E \ot_{\ZM} \RM \cong \End(P)$. Then one considers the category of projective (resp. all) $E \ot_{\ZM} \Bbbk$-modules for other fields $\Bbbk$. A typical choice for a generator
would be the sum $P_{\textrm{min}} = \oplus_{w \in W} B_w$ of all the indecomposable objects, but this choice makes computing $E$ quite difficult. In fact, the crux of Soergel's
construction is that $\BC$ (and category $\OC$ and perverse sheaves) has a nice combinatorial generator $P_{\textrm{BS}}$, the sum of all the \emph{Bott-Samelson objects}. Soergel
bimodules are, by definition, summands of Bott-Samelson bimodules, which in turn admit a simple description. In \cite{EWGR4SB}, the first author and Williamson show that the endomorphism
algebra $\End(P_{\textrm{BS}})$ also admits a nice combinatorial description, using so-called \emph{Soergel diagrams}.

In \cite{EWGR4SB} one defines a diagrammatic category $\DC$ depending on a \emph{realization}, which is roughly the data of $(W,S,V,\Bbbk)$ together with a choice of simple roots and
coroots (although $\Bbbk$ can be any commutative ring). There is an equivalence $\DC \cong \BC$ of monoidal categories when the latter is ``well behaved", i.e. when \eqref{eq:catfn} gives
an isomorphism and the indecomposable objects are parametrized by $W$. Under some minimal assumptions, $\DC$ is well behaved in this sense even when $\BC$ is not (such as when the
representation is not reflection vector faithful), justifying the statement that $\DC$ is the appropriate replacement for $\BC$. We still denote the indecomposable objects of $\DC$ by
$B_w$ for $w \in W$.

\begin{question} \label{q3} Given a realization over a complete local ring $\Bbbk$ where $[\DC] \cong \HB(W)$, for which $w \in W$ will it be the case that $\mathrm{ch}[B_w] = b_w$? \end{question}

This is the most general alternative to Question \ref{q1}, and it depends on the realization itself, not just on the characteristic of $\Bbbk$. When $W$ is crystallographic and the
realization is rootic in finite characteristic, $\DC$ agrees with the category of parity sheaves \cite{JMW} on the flag variety. Parity sheaves are a finite characteristic analog of
perverse sheaves\footnote{Perverse sheaves do exist in finite characteristic, but like Soergel bimodules for non-reflection-faithful representations, they do not possess the desired
categorification-related properties. Parity sheaves do.}, whose multiplicities were originally called $p$-KL polynomials. Therefore, Question \ref{q1} is a special case of Question
\ref{q3}.

Having chosen $P_{\textrm{BS}}$ rather than $P_{\textrm{min}}$ as our generator, one has an implicit definition of the indecomposable objects as certain summands of Bott-Samelson objects.
Finding an explicit construction of each $B_w$ is the true goal underlying all these Grothendieck-group-theoretic questions about their sizes. There is a general computational algorithm for
the idempotents (inside an endomorphism ring of a Bott-Samelson object) which project to each indecomposable summand (see \cite{LibLIF}), but this algorithm is unsatisfactory in that it
provides no insight into the dependence of this idempotent on the choice of realization. Instead, one hopes for an explicit formula (possibly inductive) for the idempotents in the generic
case, yielding explicit knowledge of their denominators. This is what we achieve for universal Coxeter groups. The case of a general Coxeter group seems to be drastically more difficult.

\begin{example} One can consider the most standard realization of a universal Coxeter group, arising from a symmetric Cartan matrix where each diagonal entry is $2$ and each non-diagonal
entry is $-2$. This realization can be defined over $\ZM$, and thus over any field $\Bbbk$, though only the characteristic of $\Bbbk$ is relevant here. Each element of the Coxeter group has
a unique reduced expression, having the form $w = s_1 s_2 s_3 \cdots s_d$ where $s_i$ are simple reflections and $s_i \ne s_{i+1}$. A maximal alternating subsequence of this reduced
expression is a consecutive subsequence $s_i s_{i+1} \cdots s_{i+k}$ (having length $k+1$), satisfying $s_i = s_{i+2} = \cdots$ and $s_{i+1} = s_{i+3} = \cdots$, and which is not contained
in a larger alternating consecutive subsequence. Our results (c.f. Proposition \ref{prop:JWdescriptive}) state that $\mathrm{ch}[B_w] = b_w$ if and only if the binomial coefficients $\binom{k}{m}$ are
invertible in $\Bbbk$ for each $0 \le m \le k$, for every $k$ such that $k+1$ is the length of a maximal alternating subsequence of $w$. \end{example}

\subsection{Techniques}

In \cite{EDihedral}, the first author demonstrated that Soergel bimodules for the infinite dihedral group were intimately related to the Temperley-Lieb algebra which arises in
$\mathfrak{sl}_2$ representation theory. The familiar Jones-Wenzl idempotents in the Temperley-Lieb algebra were transformed into idempotent endomorphisms of Bott-Samelson bimodules,
projecting to the indecomposable summands. This paper takes these ideas to their natural conclusion, producing a relationship between the multicolored Temperley-Lieb $2$-category and
Bott-Samelson bimodules (or rather, their diagrammatic analogues) for the corresponding universal Coxeter group.

In \S\ref{sec:nTL} we define the multicolored Temperley-Lieb $2$-category and explore its representation theory. We define the analogues of Jones-Wenzl idempotents. We provide a recursive
formula for these idempotents, allowing one to categorify Dyer's inductive formula for the KL basis. We also provide an immediate formula for these idempotents in terms of the Jones-Wenzl
idempotents in the usual Temperley-Lieb algebra (which unfortunately have no easy closed formula, though see \cite{Mor}). This second formula implies a criterion for when the idempotent is
not defined, which will lead to the answer to Question \ref{q3}. Specifically, $\mathrm{ch}[B_w]=b_w$ so long as certain ``colored quantum binomial coefficients" are invertible.

This answer to Question \ref{q3} relies on the fact that Jones-Wenzl projectors in the usual Temperley-Lieb algebra exist if and only if certain quantum binomial coefficients are
invertible. Though fundamental to the theory of Temperley-Lieb algebras, this fact does not seem to appear in the literature. To remedy this, we have included an appendix written by Ben
Webster, with a proof of the result over a general ring (see Theorem \ref{thm:JW-exist}).

In \S\ref{sec:universal} we define the diagrammatic category $\DC$ associated to the most general realization of a universal Coxeter group. Our definition is purely diagrammatic, using
the results of \cite{EWGR4SB}, and thus we never mention Soergel bimodules. We prove the main theorem: that the multicolored Temperley-Lieb $2$-category encodes all the morphisms of
minimal degree in $\DC$. Therefore, the Jones-Wenzl analogues provide all the indecomposable idempotents in $\DC$.

\section{The $n$-colored Temperley-Lieb 2-category}
\label{sec:nTL}

\subsection{Definitions} \label{subsec:definitions}

We assume that the reader is familiar with several topics, for which we give some references. Introductory material on the Temperley-Lieb category can be found in
\cite{WestburyTL,GooWen}. An introduction to (strict) 2-categories and their diagrammatic presentations can be found in \cite[section 2]{LauDiagrams}. An introduction to
Karoubi envelopes (of categories and 2-categories) can be found in \cite{LauSL2}.

Let $S$ be a finite set with size $n$. We associate a color to each element of $S$, blue to $b$ and red to $r$, etcetera. Let $\d$ be an indeterminate.

\begin{defn} \label{defn:STL} The \emph{$S$-colored} or \emph{$n$-colored Temperley-Lieb 2-category} $\STL$ is the $\ZM[\d]$-linear 2-category with objects $S$, having the following
presentation. There is a generating 1-morphism from $b$ to $r$, for each pair of distinct elements $b \ne r \in S$. Therefore, a general 1-morphism can be represented uniquely by the
(non-empty) sequence $\un{x} = s_1 s_2 \ldots s_m$ of colors through which it passes, satisfying $s_i \ne s_{i+1}$ for all $i$. We read 1-morphisms from right to left, so that $\un{x}$ has
source $s_m$ and target $s_1$. We say the 1-morphism has \emph{length} $\ell(\un{x}) = m$ (this is not additive under composition; it would be additive if we set $\ell(\un{x})=m-1$
instead). For instance, the identity 1-morphism of any object $s \in S$ has length $1$. We represent a composition of 1-morphisms diagrammatically as a sequence of dots on the line,
separating regions of different colors.

\begin{example} The $1$-morphism $brgryb$: $\ig{1}{1morexample}$. \end{example}

The 2-morphisms are generated by colored cups and caps. More precisely, for each $b \in S$ and for each $r \in S \setminus b$ there is a cap map $brb \to b$ and a cup map $b \to brb$,
as pictured below. \igc{1}{cupcap} In these diagrams, morphisms are read from bottom to top.

There are two types of relations, which hold for every possible coloring of regions.

\begin{equation} \label{eq:isotopy} \ig{1}{isotopy} \end{equation}

\begin{equation} \label{eq:circle} 	{
	\labellist
	\small\hair 2pt
	 \pinlabel {$-\d$} [ ] at 60 20
	\endlabellist
	\centering
	\ig{1}{circle}
	} \end{equation}

This ends the definition.
\end{defn}

\begin{remark} We shall actually be interested in a generalization of this definition, introduced in section \ref{subsec:generalizations}. For pedagogical reasons, however,
we shall temporarily work with this more familiar-looking definition. \end{remark}

Let $CM(m,k)$ denote the set of $(m,k)$-crossingless matchings in the planar strip (see \cite[section 1]{WestburyTL} for the definition). Given any element of $CM(m,k)$, one can color the
regions by elements of $S$ so that no two adjacent regions have the same color. The resulting diagram will represent some 2-morphism in $\STL$. Conversely, every 2-morphism in $\STL$ is a
$\ZM[\d]$-linear combination of such colored crossingless matchings.

\begin{defn} For fixed 1-morphisms $\un{x} = s_1 s_2 \ldots s_{m+1}$ and $\un{y} = t_1 t_2 \ldots t_{k+1}$, we let $CM(\un{x},\un{y})$ denote the subset of $(m,k)$-crossingless matchings
which can be consistently colored to yield a 2-morphism in $\Hom(\un{x},\un{y})$. \end{defn}

For example, $CM(\un{x},\un{y}) = \emptyset$ unless $s_1 = t_1$ and $s_{m+1} = t_{k+1}$.

\begin{example} An element of $CM(grgyrybgbyb,gyrorybrb)$: \igc{1}{2morexample} The reader can verify that only four elements of $CM(10,8)$ actually give rise to an element of $CM(grgyrybgbyb,gyrorybrb)$.  \end{example}

\begin{lemma} The set $CM(\un{x},\un{y})$ forms a $\ZM[\d]$-basis for $\Hom(\un{x},\un{y})$. \end{lemma}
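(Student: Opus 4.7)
The plan is to prove this in two standard steps: spanning and $\ZM[\delta]$-linear independence.

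For \textbf{spanning}, any 2-morphism in $\Hom(\underline{x},\underline{y})$ is a $\ZM[\delta]$-linear combination of vertical and horizontal compositions of the generating cups and caps, glued with identity strands. Such a composite produces a planar diagram with possibly nested cups/caps and closed circles. I would apply the isotopy relations \eqref{eq:isotopy} to straighten the diagram into a crossingless matching together with some number of disjoint circles, by exactly the standard argument used for the uncolored Temperley-Lieb category: whenever two consecutive cup/cap half-disks form a compressible extremum, the isotopy relation moves them into reduced position, and iterating terminates. The circle relation \eqref{eq:circle} then removes each closed circle at the cost of a factor of $-\delta$. Since every relation in the definition of $\STL$ is stated to hold \emph{for every coloring of regions}, the coloring is carried along passively and never obstructs a simplification. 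What remains is a $\ZM[\delta]$-multiple of a colored crossingless matching whose source is $\underline{x}$ and target is $\underline{y}$, i.e.\ an element of $CM(\underline{x},\underline{y})$.

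For \textbf{linear independence}, I would reduce to the classical result that $CM(m,k)$ is a $\ZM[\delta]$-basis for $\Hom_{\mathcal{TL}}(m,k)$ in the usual (uncolored) Temperley-Lieb category. The key observation is that, once $\underline{x}$ and $\underline{y}$ are fixed, the colors of all interior regions of a colored crossingless matching in $CM(\underline{x},\underline{y})$ are \emph{uniquely} determined by the boundary coloring. Indeed, the matching cuts the strip into simply connected regions; adjacency across a single strand forces distinct colors, and since the strip is simply connected, the boundary coloring propagates uniquely. Hence the forgetful map $CM(\underline{x},\underline{y}) \to CM(m,k)$ that discards colors is \emph{injective}.

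I would then construct a $\ZM[\delta]$-linear ``forget colors'' map
\[
\Phi \colon \Hom_{\STL}(\underline{x},\underline{y}) \longrightarrow \Hom_{\mathcal{TL}}(m,k)
\]
sending each generating colored cup/cap to the corresponding uncolored generator. This is well-defined because the defining relations \eqref{eq:isotopy} and \eqref{eq:circle} of $\STL$ map to the analogous relations in $\mathcal{TL}$. Under $\Phi$, each element of $CM(\underline{x},\underline{y})$ is sent to its underlying uncolored matching, and by the injectivity observation these images are all distinct. Since $CM(m,k)$ is $\ZM[\delta]$-linearly independent in $\Hom_{\mathcal{TL}}(m,k)$, any $\ZM[\delta]$-linear relation $\sum c_i M_i = 0$ among elements of $CM(\underline{x},\underline{y})$ would push forward to a nontrivial relation among distinct elements of $CM(m,k)$, forcing all $c_i = 0$.

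The routine parts are well-definedness of $\Phi$ and the straightening argument in spanning. The genuinely substantive input is the classical Temperley-Lieb basis theorem, which I am treating as a black box (references are given in \cite{WestburyTL,GooWen}); the only new content is the unique propagation of coloring from the boundary, which is essentially immediate from simple-connectedness of the strip. So there is no serious obstacle, provided one is content to bootstrap off the uncolored basis theorem.
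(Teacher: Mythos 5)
Your overall strategy is sound and the conclusion is correct, but it takes a genuinely different route from the paper, which simply asserts that the argument is the same as in the uncolored Temperley--Lieb category (i.e.\ one should redo the standard spanning-plus-independence argument with colors carried along). Your reduction of linear independence to the uncolored case via a color-forgetting $\ZM[\d]$-linear map $\Phi$ is a cleaner way to bootstrap off the classical basis theorem: once one knows $\Phi$ sends the proposed basis injectively into $CM(m,k)$, independence is immediate and one never has to re-examine why the classical argument survives the coloring. The spanning half is the same straightening argument in both approaches.

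One point needs to be repaired, though. Your justification of injectivity of $CM(\un{x},\un{y}) \to CM(m,k)$ does not actually work as stated: ``adjacency across a single strand forces distinct colors, and since the strip is simply connected, the boundary coloring propagates uniquely'' conflates a \emph{constraint} with a \emph{determination}. When $|S| \ge 3$, knowing the color on one side of a strand together with the rule ``adjacent regions differ'' does not pin down the color on the other side, so nothing ``propagates'' along adjacency, and simple connectedness of the strip is not by itself the relevant hypothesis. The correct and elementary reason injectivity holds is that in a crossingless matching every region meets the boundary of the strip: the strands are disjoint embedded arcs with endpoints on $\RM\times\{0,1\}$ and there are no closed loops, so no region can be bounded entirely by strands. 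Hence each region's color is read off directly from one of the boundary arcs determined by $\un{x}$ and $\un{y}$, and the coloring is uniquely determined by the boundary data. With that substitution your proof is complete.
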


\begin{proof} This can be proven in exactly the same way that one proves that $CM(m,k)$ is a $\ZM[\d]$-basis for $\Hom(m,k)$ in the usual Temperley-Lieb category. Here is a sketch of such a
proof.

The generators of $\STL$ allow one to construct a morphism in $\Hom(\un{x}, \un{y})$ for each planar 1-manifold with boundary, with appropriate coloring on the boundary. Relation
\eqref{eq:isotopy} implies that any two isotypic 1-manifolds are equal. Relation \eqref{eq:circle} is equivalent to a family of relations which states that every planar 1-manifold is equal
to $(-\d)^k$ times the underlying crossingless matching, where $k$ is the number of loops removed. It is clear that this family of relations can not create any linear dependencies between
crossingless matchings. \end{proof}

\begin{example} \label{example:identity} Note that $CM(\un{x},\un{x})$ always contains the identity crossingless matching $\1$, but may contain no others. For instance, if $s_i \ne
s_{i+2}$ for all $i$, then there can be no cups or caps, and therefore $CM(\un{x},\un{x})$ only contains the identity. \end{example}

\begin{example} When $n=1$ and $S=\{b\}$, the 2-category $\STL$ is very boring, having a unique 1-morphism $b$ with $\End(b)=\ZM[\d]$. \end{example}

\begin{example} \label{example:n=2} When $n=2$ and $S=\{r,b\}$, the 1-morphisms are alternating sequences $\un{x}=rbrb\ldots$. When $\un{x}$ and $\un{y}$ both begin with $r$, and have lengths $m+1$ and
$k+1$ respectively, then $CM(\un{x},\un{y}) = CM(m,k)$. In fact, there is an equivalence of categories between the usual Temperley-Lieb category and the full subcategory of $\STL$
obtained by considering only 1-morphisms beginning with $r$. In many senses, the two-colored Temperley-Lieb category is more natural than the usual Temperley-Lieb category, because the
representation theory of $\mathfrak{sl}_2$  is naturally $\ZM/2\ZM $ graded (even and odd representations), where we identify $\ZM/2\ZM $ with the quotient $\Lambda_{\textrm{wt}} / \Lambda_{\textrm{rt}}$ of the integral weight lattice by the root lattice. For more on this, see \cite{EQAGS}. \end{example}

\begin{exercise} Conversely, let $S$ be arbitrary, and suppose that $\un{x}$ and $\un{y}$ begin with $r$ and have lengths $m+1$ and $k+1$ repsectively. Then one has an equality
$CM(\un{x},\un{y}) = CM(m,k)$ if and only if both spaces are empty (i.e. $k+m$ is odd), or $\un{x}$ and $\un{y}$ both alternate between $r$ and another color $b$. \end{exercise}

By flipping diagrams upside-down, one obtains a bijection between $CM(\un{x},\un{y})$ and $CM(\un{y},\un{x})$. This extends to an antiinvolution $\iota$ on $\STL$.

\subsection{The Karoubi envelope} \label{subsec:karoubi}

In this paper, $\Bbbk$ will always be a commutative ring, perhaps with extra structure. In this section and the next, $\Bbbk$ will be a complete local $\ZM[\d]$-algebra. We now work in
the 2-category $\STL \ot_{\ZM[\d]} \Bbbk$ obtained by base change, and abusively denote this category $\STL$.

\begin{remark} It is well-known that the usual Temperley-Lieb category is cellular (see \cite[section 2]{WestburyCellular} for the definition). In fact, it is an especially nice kind of
cellular category known as an \emph{object-adapted cellular category}, meaning roughly that the cells correspond to some objects in the category, and that the top cell of each of these
objects contains only the identity map. The monoidal structure is usually ignored when studying the cellular structure (certainly the theory of monoidal cellular categories has not been
thoroughly explored).

Similarly, the 2-category $\STL$, when viewed as a 1-category by forgetting the structure of horizontal concatenation, is an (object-adapted) cellular category, using a direct adaptation
of the structure on the usual Temperley-Lieb category. The features of the Karoubi envelope of $\STL$ that we discuss below are in fact rather general properties of object-adapted
cellular categories, but we give complete proofs. In particular, references to ``shorter sequences" below should be replaced with references to the cellular partial order. Future work of
the first author will contain more discussion of object-adapted cellular categories. \end{remark}

Fix a 1-morphism $\un{x}$ of length $m+1$. A key property of the set $CM(m,m)$, which we used implicitly in Example \ref{example:identity}, is that every diagram except the identity
contains a cap on bottom and a cup on top. In particular, the span of the non-identity diagrams in $CM(\un{x},\un{x})$ forms a two-sided ideal $I_{< \un{x}} \subset \End(\un{x})$,
whose quotient is free of rank $1$ over $\Bbbk$, spanned by the identity.

Suppose that one can decompose $\1 \in \End(\un{x})$ into a sum $\1 = \sum e_i$ of orthogonal indecomposable idempotents. It is easy to see, by working modulo $I_{< \un{x}}$, that there
is a unique idempotent $e_0$ with a non-zero coefficient of the identity (in the basis $CM(\un{x},\un{x})$), and this coefficient is 1. The remaining idempotents lie within $I_{<
\un{x}}$. Our goal is to prove that within the Karoubi envelope $\Kar(\STL)$, the object $\un{x}$ has a unique indecomposable summand $V_{\un{x}}$ which is not a summand of $\un{y}$ for
any shorter sequence; it is the image of $e_0$. In other words, the idempotents within $I_{< \un{x}}$ actually factor through shorter sequences $\un{y}$.

\begin{lemma} Suppose that $\Bbbk$ is a complete local ring. Then $\Kar(\STL)$ has the Krull-Schmidt property. \end{lemma}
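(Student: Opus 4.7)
The plan is to reduce the Krull-Schmidt property to a semiperfectness statement for endomorphism rings, and then verify that statement using the finiteness of the morphism spaces together with the completeness of $\Bbbk$.

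By a classical criterion, an idempotent-complete additive $\Bbbk$-linear category has the Krull-Schmidt property provided each of its endomorphism rings is semiperfect (see, for instance, Krause's exposition of Krull--Remak--Schmidt categories). Since $\Kar(\STL)$ is idempotent-complete by construction, it suffices to show that for every object $(\un{x}, e)$ of $\Kar(\STL)$, the ring $A := e \End_{\STL}(\un{x}) e$ is semiperfect.

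The key input is finiteness. By the basis lemma established just above, $\End_{\STL}(\un{x})$ is a free $\Bbbk$-module of finite rank, since $CM(\un{x}, \un{x})$ is a finite set (it sits inside the set of crossingless matchings on $2(\ell(\un{x})-1)$ points, which is counted by a Catalan number). As a $\Bbbk$-submodule and a quotient of $\End_{\STL}(\un{x})$, the corner ring $A$ is again finitely generated as a $\Bbbk$-module. One now invokes the standard fact that any $\Bbbk$-algebra which is finitely generated as a module over a complete local ring $\Bbbk$ with maximal ideal $\mathfrak{m}$ is semiperfect. Indeed, $A/\mathfrak{m}A$ is a finite-dimensional algebra over the residue field $\Bbbk/\mathfrak{m}$, hence Artinian and a fortiori semiperfect; Nakayama's lemma (applicable since $A$ is a finitely generated $\Bbbk$-module) yields $\mathfrak{m}A \subseteq J(A)$, so $A/J(A)$ is a quotient of the semisimple ring $(A/\mathfrak{m}A)/J(A/\mathfrak{m}A)$ and therefore semisimple; and the $\mathfrak{m}$-adic completeness inherited from $\Bbbk$ permits the lifting of idempotents from $A/\mathfrak{m}A$ to $A$ via the usual Hensel-style iteration.

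The only genuine technical point is the idempotent-lifting step, which is standard once completeness of $A$ in the $\mathfrak{m}$-adic topology has been secured; all of the other ingredients are formal consequences of the basis lemma and the construction of the Karoubi envelope.
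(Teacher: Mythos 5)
Your argument is correct and is precisely the standard expansion of the paper's one-line proof, which simply invokes "a general fact for $\Bbbk$-linear categories with finite dimensional Hom spaces." Your reduction to semiperfectness of the corner rings $e\End(\un{x})e$, using the finite $\Bbbk$-rank of $\End(\un{x})$ from the basis lemma together with idempotent lifting over the complete local base, is exactly the content of that general fact.
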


\begin{proof} This is a general fact for $\Bbbk$-linear categories with finite dimensional Hom spaces. A similar proof can be found in \cite[Proposition 1.1]{LenzingLectureNotes}.
\end{proof}

\begin{prop} Suppose that $\Bbbk$ is a complete local ring. For each sequence $\un{x}$ choose a decomposition $\1 = \sum e_i$ into orthogonal indecomposable idempotents, such that $e_0 =
\1$ modulo $I_{< \un{x}}$. Let $V_{\un{x}}$ denote the image of $e_0$, an object in $\Kar(\STL)$. Then the collection of all $V_{\un{x}}$ over all sequences $\un{x}$ form a complete list
of non-isomorphic indecomposable objects in $\Kar(\STL)$, and $$\un{x} \cong V_{\un{x}} \oplus \bigoplus_{\ell(\un{y}) < \ell(\un{x})} V_{\un{y}}^{\oplus m_{\un{y}}}.$$ In particular, by
the Krull-Schmidt property, the object $V_{\un{x}}$ is independent of the choice of idempotent decomposition, up to isomorphism. \end{prop}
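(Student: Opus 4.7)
The plan is to proceed by induction on $\ell(\un{x})$. The base case $\ell(\un{x})=1$ is immediate, since $\End(\un{x})=\Bbbk$, the only nontrivial idempotent is $\1$, and $V_{\un{x}}=\un{x}$ is trivially indecomposable.

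For the inductive step, I first isolate $V_{\un{x}}$. The quotient $\End(\un{x})/I_{<\un{x}}$ is free of rank one over $\Bbbk$, spanned by the class of $\1$, so in the chosen orthogonal decomposition $\1 = \sum e_i$ into indecomposable idempotents exactly one idempotent (namely $e_0$) is nonzero modulo $I_{<\un{x}}$, with image $\1$; the others lie in $I_{<\un{x}}$. Thus $V_{\un{x}}:=\mathrm{Im}(e_0)$ is an indecomposable summand of $\un{x}$ occurring with multiplicity one in this decomposition, and by the Krull-Schmidt property it is independent of the choice of decomposition up to isomorphism.

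The crux is to identify the remaining summands $\mathrm{Im}(e_i)$ for $i \ne 0$. Every non-identity basis element of $CM(\un{x},\un{x})$ contains a cap at the bottom, and such a cap exhibits the diagram as a composition through a sequence of length $\ell(\un{x})-2$. Writing $e_i \in I_{<\un{x}}$ as a $\Bbbk$-linear combination of such basis elements and combining the individual cap-factorizations in the additive envelope, I obtain $e_i = \alpha \circ \beta$ with $\beta:\un{x}\to\un{y}$ and $\alpha:\un{y}\to\un{x}$, where $\un{y}$ is a direct sum of sequences strictly shorter than $\un{x}$. I then invoke the standard Karoubi factorization fact: if $e = \alpha\beta$ is idempotent, then $f := \beta e \alpha \in \End(\un{y})$ is also idempotent and $\mathrm{Im}(e) \cong \mathrm{Im}(f)$, so $\mathrm{Im}(e_i)$ is a summand of $\un{y}$. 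The induction hypothesis then writes $\un{y}$ as a direct sum of $V_{\un{y}'}$ with $\ell(\un{y}') < \ell(\un{x})$, identifying $\mathrm{Im}(e_i)$ with some such $V_{\un{y}'}$. This yields the decomposition $\un{x} \cong V_{\un{x}} \oplus \bigoplus V_{\un{y}}^{m_{\un{y}}}$.

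It remains to verify pairwise non-isomorphism and completeness. If $V_{\un{x}} \cong V_{\un{x}'}$ with $\un{x} \ne \un{x}'$, I may assume $\ell(\un{x}) \ge \ell(\un{x}')$. Transporting $e_0$ through the isomorphism shows $e_0 = \alpha \circ \beta$ with $\beta:\un{x}\to\un{x}'$ and $\alpha:\un{x}'\to\un{x}$. If $\ell(\un{x}) > \ell(\un{x}')$, then $\beta$ has more bottom than top points and must contain a cap at the bottom; if $\ell(\un{x}) = \ell(\un{x}')$, a matching without caps at the bottom consists entirely of through-strands, whose region colors force $\un{x} = \un{x}'$. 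In either case every basis element of $\Hom(\un{x},\un{x}')$ has a cap at the bottom, so $\alpha \circ \beta$ lies in $I_{<\un{x}}$, contradicting $e_0 \equiv \1 \pmod{I_{<\un{x}}}$. Completeness is automatic, since any indecomposable object of $\Kar(\STL)$ is a summand of some sequence $\un{x}$ and hence isomorphic to one of the $V_{\un{y}}$ appearing in its decomposition. The main obstacle is the Karoubi factorization step, where one must fuse the individual cap-factorizations of the basis elements comprising $e_i$ into a single factorization $e_i = \alpha \circ \beta$ through an object $\un{y}$ of the additive envelope, and then correctly match the resulting summand with a previously constructed $V_{\un{y}'}$.
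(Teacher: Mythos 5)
Your proof is correct, but it takes a genuinely different route from the paper's. The paper does not induct on length. Instead, given an arbitrary indecomposable idempotent $e\in\End(\un{x})$, it expands $e$ in the diagram basis $\{S\circ T\}$, picks a sequence $\un{y}$ of \emph{maximal} length such that some coefficient $a_{S,T}$ with middle object $\un{y}$ is nonzero, passes to the quotient category $\STL/I_{\ngeq\un{y}}$, and uses a Jacobson-radical argument (via $e^3=e$) to produce a single triple $(\un{y},X,Y)$ for which $Y\circ e\circ X$ is a unit in $\Bbbk=\End(\un{y})/I_{<\un{y}}$. Composing with $e_0\in\End(\un{y})$ and using that $\End(V_{\un{y}})$ is local then yields $V\cong V_{\un{y}}$ directly, without ever decomposing $e$ further. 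Your argument is more elementary: you isolate $e_0$, observe that each remaining $e_i$ lies in $I_{<\un{x}}$, fuse the cap-factorizations of the basis elements in the additive closure into a single factorization $e_i=\alpha\beta$ through a direct sum of shorter sequences, apply the standard Karoubi lemma $\operatorname{Im}(\alpha\beta)\cong\operatorname{Im}(\beta\alpha\beta\alpha)=\operatorname{Im}(\beta e_i\alpha)$, and then invoke the inductive hypothesis plus Krull--Schmidt. What the paper's method buys is that it works in a single stroke and is stated in a form adapted to the cellular/object-adapted setting (it is transplanted almost verbatim from the proof of the Soergel categorification theorem in \cite{EWGR4SB}); what your induction buys is transparency --- it avoids the quotient category and the maximality/Jacobson-radical analysis entirely, at the small cost of being special to the length filtration. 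Your non-isomorphism argument (that every basis element of $\Hom(\un{x},\un{x}')$ for $\un{x}\ne\un{x}'$ has a cap at the bottom, forcing $e_0=\alpha\beta\in I_{<\un{x}}$) and your completeness argument are both correct.
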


The sections which follow will give a more intuitive and obvious proof under some additional assumptions, and the novice reader should skip there. We now provide a general proof, which is
adapted directly from the proof of the Soergel Categorification Theorem found in \cite[section 6.6]{EWGR4SB}.

\begin{proof} It is not hard to reduce to the following statement: for each $\un{x}$ and each indecomposable idempotent $e \in \End(\un{x})$, the corresponding object $V$ in $\Kar(\STL)$
is isomorphic to $V_{\un{y}}$ for some $\un{y}$ with $\ell(\un{y}) \le \ell(\un{x})$, with equality if and only if $e = \1$ modulo $I_{< \un{x}}$.

Any diagram in $CM(\un{x},\un{x})$ factors as $S \circ T$, for some triple $(\un{z},S,T)$ where $\ell(\un{z}) \le \ell(\un{x})$, $T \in \CM(\un{x},\un{z})$ is a cap diagram, and $S \in
\CM(\un{z},\un{x})$ is a cup diagram (see \cite[section 2]{WestburyTL} for the definition of cap and cup diagrams). Then one can expand $e$ in the diagram basis \[ e = \sum_{\un{z}}
\sum_{(\un{z},S,T)} a_{S,T} S \circ T \] with some coefficients $a_{S,T} \in \Bbbk$. Choose a sequence $\un{y}$ of maximal length such that there exists a triple $(\un{y},S,T)$ with
$a_{S,T} \ne 0$. Note that $\un{y} = \un{x}$ precisely when $e = \1$ modulo $I_{< \un{x}}$.

We wish to show that there is some triple $(\un{y},X,Y)$ such that \[ Y \circ e \circ X \in \Bbbk^\times \subset \Bbbk = \End(\un{y})/I_{<\un{y}}. \] Let $I_{\ngeq \un{y}}$ denote the
ideal of all morphisms which factor through any sequence shorter than $\un{y}$, or any other sequence of the same length. We now proceed to work in the quotient category $\STL / I_{\ngeq
\un{y}}$. The image of $e$ is still a nonzero idempotent, expanded as above except only using triples with $\un{z}=\un{y}$. Let $\mg$ denote the maximal ideal of $\Bbbk$. Suppose that \[T
\circ e \circ S \in \mg \subset \Bbbk = \End(\un{y})/I_{< \un{y}} \] for all triples $(\un{y},S,T)$. By expanding $e^3 = e$ one can deduce that each $a_{S,T} \in \mg$. But this is a
contradiction, as $\mg \End(\un{x})$ is contained in the Jacobson radical of $\End(\un{x})$, and no non-zero idempotent can be contained in the Jacobson radical.

The map $Y \circ e$ induces a map $V \to \un{y}$, and $e \circ X$ induces a map in the other direction. By composing these further with the chosen idempotent $e_0$ inside $\un{y}$, we
obtain maps $e_0 \circ Y \circ e \co V \to V_{\un{y}}$ and $e \circ X \circ e_0 \co V_{\un{y}} \to V$. Composing these maps we get an endomorphism of $V_{\un{y}}$ which projects to an
invertible map in $\End(V_{\un{y}}) / I_{< \un{y}} = \Bbbk$, so that it must be invertible in the local ring $\End(V_{\un{y}})$. Therefore, $V_{\un{y}}$ occurs as a summand of $V$, and
since $V$ is indecomposable, we have $V \cong V_{\un{y}}$. \end{proof}

\subsection{Orthogonality} \label{subsec:perpendicular}

In the rest of this chapter, we discuss the case when $e_0$ has an alternative description as the unique idempotent perpendicular to $I_{< \un{x}}$. In particular, $e_0$ is canonically
defined, and $V_{\un{x}}$ is well-defined up to unique isomorphism. In this case, the recursive formula of the following section will make the fact that all other idempotents factor
through shorter expressions immediately obvious.

Let $T=T(\un{x}) \subset \End(\un{x})$ be the right perpendicular space to $I_{< \un{x}}$. In other words, $T$ is the $\Bbbk$-module consisting of all $f \in \End(\un{x})$ such that
$cf=0$ for any cap $c$.

{ \labellist \small\hair 2pt \pinlabel {$f$} [ ] at 39 22 \endlabellist \centering \igc{1}{Tdefn} }

\noindent (The lack of color in some regions is supposed to represent the irrelevance of those colors.) Similarly, let $B=B(\un{x}) \subset \End(\un{x})$ be the left perpendicular space, the
$\Bbbk$-module consisting of all $f \in \End(\un{x})$ such that $fc=0$ for any cup $c$. Clearly $\iota(B)=T$.

\begin{claim} \label{claim:oneD} Suppose that $B(\un{x})$ contains an element $f$ for which the coefficient of the identity is invertible in $\Bbbk$. Then $B=T$ and both are spanned by
$f$. Every element of $B$ is fixed by $\iota$. Moreover, $B$ contains a unique idempotent $JW(\un{x})$, which is determined within $B$ by the fact that the coefficient of the identity
is $1$. The idempotent $JW(\un{x})$ is indecomposable and central, and it is the unique indecomposable idempotent not contained in $I_{< \un{x}}$. \end{claim}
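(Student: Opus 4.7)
The plan is to use the given $f \in B$ together with the anti-involution $\iota$ to show $f \in B \cap T$, after which the ``absorbing'' property of $f$ will collapse both $B$ and $T$ onto $\Bbbk f$. Let $\phi \co \End(\un{x}) \to \Bbbk$ denote the coefficient-of-identity functional; after rescaling we may assume $\phi(f) = 1$, and write $f = \1 + g$ with $g \in I_{<\un{x}}$. The foundation is the pair of annihilation identities
\[
B \cdot I_{<\un{x}} = 0 \qquad \text{and} \qquad I_{<\un{x}} \cdot T = 0,
\]
which follow by decomposing any element of $I_{<\un{x}}$ as a sum of products $X \circ Y$ with $X$ a cup diagram and $Y$ a cap diagram: elements of $B$ annihilate $X$ on the left (apply cups one at a time), and symmetrically elements of $T$ annihilate $Y$ on the right. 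In particular $fg = 0$, so $f^2 = f$, and $f$ is idempotent.

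Since $\iota(B) = T$, the element $\iota(f) \in T$ has identity coefficient $1$ and is idempotent by the symmetric argument. The second annihilation identity gives $g \cdot \iota(f) = 0$, so $f \iota(f) = \iota(f)$. Applying the anti-involution $\iota$ to this equation and using that it is involutive and anti-multiplicative yields $f \iota(f) = f$. Combining gives $\iota(f) = f$, so $f \in B \cap T$ and is fixed by $\iota$.

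Now $f \in T$ implies $I_{<\un{x}} \cdot f = 0$. Decomposing an arbitrary $h = \phi(h) \cdot \1 + h_0$ with $h_0 \in I_{<\un{x}}$ yields $hf = \phi(h) f$, hence $\End(\un{x}) \cdot f = \Bbbk f$; the symmetric argument gives $f \cdot \End(\un{x}) = \Bbbk f$. On the other hand, any $h \in B$ satisfies $hg = 0$ by the first annihilation identity, so $h = hf \in \End(\un{x}) \cdot f = \Bbbk f$; therefore $B = \Bbbk f$, and dually $T = \Bbbk f$. Both are spanned by $f$ and fixed pointwise by $\iota$ since $\iota$ is $\Bbbk$-linear and $\iota(f) = f$.

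The remaining assertions follow routinely. An idempotent $\lambda f \in B$ with $\phi(\lambda f) = 1$ forces $\lambda = 1$, giving uniqueness of $JW(\un{x}) := f$. The formulas $fh = \phi(h) f = hf$ give centrality. The local ring $f \End(\un{x}) f = \Bbbk f \cong \Bbbk$ has only trivial idempotents (by the standing complete-local-ring assumption on $\Bbbk$), so $f$ is primitive. Finally, given any indecomposable idempotent $e \notin I_{<\un{x}}$, centrality of $f$ makes $ef$ an idempotent in the local ring $e \End(\un{x}) e$, so $ef \in \{0, e\}$; the case $ef = 0$ would give $e = e(\1 - f) \in I_{<\un{x}}$ (since $\1 - f = -g \in I_{<\un{x}}$), a contradiction, so $ef = e = fe \in \Bbbk f$, whence $e = f$. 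The main obstacle is the deduction $\iota(f) = f$ in the second paragraph: without knowing $f \in T$, the identity $I_{<\un{x}} \cdot f = 0$ is unavailable and the collapse argument cannot get off the ground. Every subsequent step is structurally forced once this hinge is in place.
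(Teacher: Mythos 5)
Your proof is correct and takes essentially the same route as the paper's: both hinge on the annihilation identities $B \cdot I_{<\un{x}} = 0$ and $I_{<\un{x}} \cdot T = 0$ together with the anti-involution $\iota$ to extract $\iota(f) = f$, after which the collapse $B = T = \Bbbk f$ and the centrality, idempotency, and indecomposability assertions follow by the same bookkeeping. The only cosmetic difference is ordering: the paper first deduces $T \subseteq \Bbbk f$ by comparing $fg = \lambda g$ with $fg = \mu f$ for $g \in T$ and then reads off $\iota(f) = f$ from $\iota(f) \in T$, whereas you obtain $\iota(f) = f$ directly by computing $f\iota(f)$ two ways before performing the collapse.
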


\begin{proof} Let us write $f = \l \1 + f'$, where $f' \in I_{<\un{x}}$ and $\l \in \Bbbk$ is invertible. For any $g \in T$ one has $f' g=0$, since any non-identity diagram has a cap
on bottom. Therefore $fg = \l g$. By the same token, if $g \in \mu \1 + I_{<\un{x}}$ then $fg = \mu f$. In particular, $g = \mu \l^{-1} f$. This proves that every element of $T$ is in
the $\Bbbk$-span of $f$. By the same token, every element of $B$ is in the $\Bbbk$-span of $\iota(f)$. Thus $\iota(f) \in T$ is a multiple of $f$, and the coefficient of the
identity is also $\l$, so that $\iota(f)=f$. Therefore $B=T = \Bbbk \cdot f$, and every element is $\iota$-fixed. Moreover, letting $JW(\un{x}) = \l^{-1} f$, the above argument proves
that $JW(\un{x})^2 = JW(\un{x})$.

If $g \in \End(\un{x})$ is any element, and $g \in \mu \1 + I_{<\un{x}}$, then $JW(\un{x}) g = g JW(\un{x}) = \mu JW(\un{x})$, so that $JW(\un{x})$ is central. In particular, the ideal of
$JW(\un{x})$ is free of rank 1, from which it follows that the idempotent is indecomposable. Moreover, $JW(\un{x}) g = 0$ if and only if $g \in I_{< \un{x}}$, so that every other
indecomposable idempotent is contained in $I_{< \un{x}}$. \end{proof}

This idempotent $JW(\un{x})$, which we call the \emph{top idempotent}, is akin to the Jones-Wenzl projectors defined for usual Temperley-Lieb algebras. We draw $JW(\un{x})$ as a box
labeled by $\un{x}$, as in this example.

{
\labellist
\small\hair 2pt
 \pinlabel {$rgybrbg$} [ ] at 41 24
\endlabellist
\centering
\igc{1}{JWnotation}
}

It is possible that $B$ does not contain any element with invertible coefficient of the identity, in which case we say that the top idempotent does not exist. In this case, the special
idempotent $e_0$ discussed above can be more complicated, and will not be orthogonal to $I_{< \un{x}}$.

\subsection{A recursive formula for top idempotents} \label{subsec:recursive}

We use quantum number notation to indicate certain elements in $\Bbbk$. Let $[2] \in \Bbbk$ be the image of $\d$, and let $[1]=1$ and $[0]=0$. One defines the
quantum number $[m] \in \Bbbk$ for $m \in \ZM$ by the recursive formula \begin{equation} [2][m]=[m+1] + [m-1]. \end{equation}

Given a sequence of colors $\un{x}$, a \emph{subsequence} $\un{y} \subset \un{x}$ will always indicate a consecutive subsequence. A subsequence is \emph{alternating} if it alternates
between two colors in $S$; it is \emph{maximal alternating} if it can not be extended to a longer alternating subsequence. An \emph{initial subsequence} is a sequence consisting of the
first $k$ colors in $\un{x}$, and a \emph{final subsequence} is a sequence consisting of the last $k$ colors in $\un{x}$, for any $1 \le k \le \ell(x)$. The \emph{tail} of $\un{x}$ is the
maximal alternating final subsequence. For example, the tail of $gbryrgbrbrb$ is the final 5 colors $brbrb$.

In this section we provide a recursive formula for top idempotents under the assumption that certain quantum numbers are invertible in $\Bbbk$. This imitates a formula from \cite{Wenzl},
giving the Jones-Wenzl projectors in the usual Temperley-Lieb category.

\begin{prop} \label{prop:JWrecursive} When $\ell(\un{x}) \le 2$, the idempotent $JW(\un{x})$ exists, and is equal to the identity map. Now suppose that $\un{x} = \ldots r b$ and that
$JW(\un{y})$ exists for all initial subsequences of $\un{x}$. Extending $\un{x}$ by a color $g \ne r,b$ one has \begin{equation} \label{eq:JWeasy} 	{
	\labellist
	\small\hair 2pt
	 \pinlabel {$\un{x}g$} [ ] at 42 22
	 \pinlabel {$\un{x}$} [ ] at 145 22
	\endlabellist
	\centering
	\ig{1}{JWeasy}
	}. \end{equation}
Extending $\un{x}$ by $r$, when $[k]$ is invertible one has \begin{equation} 	{
	\labellist
	\small\hair 2pt
	 \pinlabel {$\un{x}r$} [ ] at 43 53
	 \pinlabel {$\un{x}$} [ ] at 147 53
	 \pinlabel {$\un{x}$} [ ] at 273 86
	 \pinlabel {$\un{x}$} [ ] at 273 21
	 \pinlabel {$\un{z}$} [ ] at 262 53
	 \pinlabel {$\frac{[k-1]}{[k]}$} [ ] at 221 57
	\endlabellist
	\centering
	\ig{1}{JWhard}
	}. \label{eq:JWhard} \end{equation} The sequence $\un{z} = \ldots r$ is the initial subsequence of $\un{x}$ which is only missing the final
$b$. The number $k$ appearing in \eqref{eq:JWhard} is the length of the tail of $\un{x}$. Moreover, the map \begin{equation} \label{eq:loweridempotent} 	{
	\labellist
	\small\hair 2pt
	 \pinlabel {$\un{x}$} [ ] at 35 86
	 \pinlabel {$\un{x}$} [ ] at 35 21
	 \pinlabel {$\un{z}$} [ ] at 27 53
	 \pinlabel {$-\frac{[k-1]}{[k]}$} [ ] at -20 57
	\endlabellist
	\centering
	\ig{1}{loweridempotent}
	} \end{equation} is an idempotent in $\End(\un{x}r)$ orthogonal to $JW(\un{x}r)$. If $[k]$ is not invertible then $JW(\un{x}r)$ does not exist. \end{prop}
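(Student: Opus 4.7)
The plan is to verify, by induction on $\ell(\un{x})$, that the proposed formulas yield an element satisfying the three properties of the top idempotent from Claim~\ref{claim:oneD}: coefficient of $\id$ equal to~$1$, membership in $B(\un{x}r)$, and idempotency. The base case $\ell(\un{x}) \le 2$ is immediate because no cup or cap fits inside such a short sequence, so $\End(\un{x}) = \Bbbk \cdot \id$ and $JW(\un{x}) = \id$. For the easy extension by $g \ne r,b$ the candidate $JW(\un{x}) \otimes \id_g$ inherits idempotency and identity coefficient $1$ from $JW(\un{x})$; the $B$-condition is immediate because no cup of type $s \to sts$ can touch the new $g$-strand (this would demand the third-to-last color of $\un{x}g$ to equal $g$, yet it is $r$), so every cup from below factors as $c' \otimes \id_g$ and is killed by the inductive $B$-condition on $JW(\un{x})$.

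For the hard extension by~$r$, I set $A = JW(\un{x}) \otimes \id_r$, let $U\colon \un{z} \to \un{x}r$ be the unique cup involving the final strand, $C = \iota(U)\colon \un{x}r \to \un{z}$ the adjoint cap, and $D = A \circ U \circ JW(\un{z}) \circ C \circ A \in \End(\un{x}r)$, writing $P = A - \frac{[k-1]}{[k]} D$.  The $\id$-coefficient of $P$ is $1$ because $D$ contains a cap below and hence lies in $I_{<\un{x}r}$.  For the $B$-condition, cups not touching the final strand factor as $c' \otimes \id_r$ and are killed by $JW(\un{x}) \in B(\un{x})$; the only cup touching the final strand is $U$ itself, and $P \circ U = 0$ reduces, using $A^2 = A$ and $AD = DA = D$, to a single relation between $A \circ U$ and $A \circ U \circ JW(\un{z}) \circ C \circ A \circ U$. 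Idempotency $P^2 = P$ reduces in the same way to a computation $D^2 = \mu D$, and a straightforward expansion forces $\mu = [k]/[k-1]$ (up to the sign conventions of the circle relation).

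The key identity---and main obstacle---is
\[ JW(\un{z}) \circ (C A U) \circ JW(\un{z}) \;=\; \tfrac{[k]}{[k-1]} \cdot JW(\un{z}) \]
as an equation in $\End(\un{z})$. Since $JW(\un{z})$ is central with $\End(\un{z})/I_{<\un{z}} = \Bbbk$ (Claim~\ref{claim:oneD}), the right-hand scalar equals the coefficient of $\id_{\un{z}}$ in $CAU$, so the problem reduces to evaluating this coefficient. I would proceed by a secondary induction on the tail length~$k$. The base case $k=2$ is direct: then $\un{x} = rb$, $A$ is the identity, and $CAU$ is topologically a single colored circle bounded by $U$ from below and $C$ from above, evaluating via~\eqref{eq:circle} to yield the required scalar. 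For the inductive step one expands $JW(\un{x}) = JW(\un{z}b)$ by its own recursive formula at tail length $k-1$, simplifies using the isotopy relation~\eqref{eq:isotopy} and the $B$- and $T$-killing properties of the smaller $JW$-boxes, and invokes the inductive hypothesis.

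Once the key identity is in hand, the orthogonality of the lower idempotent $e' = A - P = \frac{[k-1]}{[k]} D$ to $P = JW(\un{x}r)$ is a direct two-line computation: $P \circ e' = \frac{[k-1]}{[k]} D - \bigl(\frac{[k-1]}{[k]}\bigr)^2 D^2 = 0$ using $D^2 = \frac{[k]}{[k-1]} D$, and $(e')^2 = e'$ similarly. For the non-existence statement, when $[k]$ is not a unit we still have the cleared-denominators element $Q = [k] A - [k-1] D \in B(\un{x}r)$ with $\id$-coefficient $[k]$; the same inductive structure that yielded the key identity also shows $B(\un{x}r)$ is spanned by $Q$ together with elements whose $\id$-coefficient vanishes, so no element of $B(\un{x}r)$ has invertible identity coefficient when $[k]$ fails to be a unit, and Claim~\ref{claim:oneD} then obstructs the existence of $JW(\un{x}r)$.
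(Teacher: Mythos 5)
Your overall route coincides with the paper's: you verify the defining properties of the top idempotent from Claim~\ref{claim:oneD}, and the entire weight rests on the partial-closure identity for $CAU$, which the paper records as \eqref{eq:reverselower} and proves exactly as you propose --- induction on the tail length $k$, a circle evaluation at $k=2$, and for $k>2$ an expansion of $JW(\un{x})=JW(\un{z}b)$ via its own recursion at tail length $k-1$ (only the identity and the diagram \eqref{rightcoeff} contribute, giving $-[2]+\tfrac{[k-2]}{[k-1]}=-\tfrac{[k]}{[k-1]}$). Two cosmetic slips: with the convention of \eqref{eq:circle} (a circle evaluates to $-\d=-[2]$) the closure scalar is $-\tfrac{[k]}{[k-1]}$ and the recursion coefficient in \eqref{eq:JWhard} is $+\tfrac{[k-1]}{[k]}$, so your signs are flipped relative to \eqref{eq:JWhard} and \eqref{eq:loweridempotent}; and tail length $k=2$ does not force $\un{x}=rb$ --- it only forces $JW(\un{x})=JW(\un{z})\ot\id_b$ via \eqref{eq:JWeasy}, after which the circle detaches next to the $JW(\un{z})$ box. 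Your separate verification of idempotency is harmless but redundant: once membership in $B(\un{x}r)$ and identity coefficient $1$ are known, Claim~\ref{claim:oneD} supplies idempotency, uniqueness, and the orthogonality statement.

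The genuine gap is in the non-existence direction. Exhibiting $Q=[k]A\pm[k-1]D\in B(\un{x}r)$ with identity coefficient $[k]$ proves nothing by itself (the zero element of $B(\un{x}r)$ always has non-invertible identity coefficient), and your asserted spanning statement --- that $B(\un{x}r)$ is spanned by $Q$ together with elements whose identity coefficient vanishes --- is essentially equivalent to what you are trying to prove (it says every identity coefficient occurring in $B(\un{x}r)$ lies in the ideal generated by $[k]$), and it does not follow from the induction that gave the closure identity. The missing ingredient is that $Q$ has an \emph{invertible} coefficient on a non-identity diagram: its coefficient on \eqref{rightcoeff} is $[k-1]$, a unit by the inductive hypothesis that $JW$ exists for all initial subsequences. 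With that observation the contradiction is immediate, and it is how the paper argues: if $JW(\un{x}r)$ existed, Claim~\ref{claim:oneD} would force every element of $B(\un{x}r)$ to equal its identity coefficient times $JW(\un{x}r)$, so $Q=[k]\,JW(\un{x}r)$ and every coefficient of $Q$ would be a multiple of $[k]$, contradicting the invertibility of $[k-1]$. Add this step (or an equivalent pairing argument that produces the relation $\mu[k-1]\in([k])$ for the identity coefficient $\mu$ of an arbitrary element of $B(\un{x}r)$); the rest of your proposal is sound and matches the paper's proof.
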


Note that the idempotent $JW(\un{z})$ which appears in \eqref{eq:loweridempotent} can be replaced by the identity map of $\un{z}$. After all, any non-identity term will have a cup
on top, which will annihilate $JW(\un{x})$. We included the idempotent $JW(\un{z})$ because it implies that the idempotent \eqref{eq:loweridempotent} factors through $V_{\un{z}}$
inside $\un{z}$.

\begin{exercise} When \eqref{eq:JWhard} holds, show that the coefficient of \begin{equation} \label{rightcoeff} \ig{1}{rightcoeff} \end{equation} in $JW(\un{x}r)$ is precisely
$\frac{[k-1]}{[k]}$. Hint: replace each $JW(\un{x})$ with a linear combination of crossingless matchings in the RHS of \eqref{eq:JWhard}, and observe that only a single term could
possibly contribute to this coefficient. \end{exercise}

\begin{proof} When $\ell(\un{x}) \le 2$, $CM(\un{x},\un{x})$ only contains the identity map, and $I_{< \un{x}}=0$. It is clear that $B=T=\End(\un{x})$ and $JW(\un{x}) = \1_{\un{x}}$.
We assume henceforth that $\ell(\un{x}) \ge 2$, and that $JW(\un{y})$ exists for all initial subsequences $\un{y}$ of $\un{x}$. If $k$ is the length of the tail of $\un{x}$, then our
inductive hypothesis implies that $[l]$ is invertible for all $l<k$.

Suppose that $g \ne r,b$. Any non-identity diagram in $CM(\un{x}g, \un{x}g)$ must begin with a cup, and it is inconsistent with the coloring for this cup to involve the final strand.
Therefore any non-identity diagram will kill the RHS of \eqref{eq:JWeasy}, because a cup enters $JW(\un{x})$. The RHS of \eqref{eq:JWeasy} is clearly in $B(\un{x}g)$, and the
coefficient of the identity is equal to $1$ since this is true also in $JW(\un{x})$, so that the RHS is equal to $JW(\un{x}g)$.

Now we extend $\un{x}$ by $r$. We claim that \begin{equation}\label{eq:reverselower}{
\labellist
\small\hair 2pt
 \pinlabel {$\un{z}$} [ ] at 30 87
 \pinlabel {$\un{z}$} [ ] at 30 21
 \pinlabel {$\un{z}$} [ ] at 170 59
 \pinlabel {$\un{x}$} [ ] at 39 54
 \pinlabel {$-\frac{[k]}{[k-1]}$} [ ] at 120 56
\endlabellist
\centering
\ig{1}{reverselower}
}. \end{equation} To show this we use induction,
assuming that $JW(\un{x})$ was defined using either \eqref{eq:JWeasy} or \eqref{eq:JWhard} to extend $\un{z}$ by $b$. When $k=2$, $JW(\un{x})$ is defined
using \eqref{eq:JWeasy}, and \eqref{eq:reverselower} is clear since the value of a circle is $-[2]$. When $k>2$, $JW(\un{x})$ is defined using \eqref{eq:JWhard}, and the size of the tail of $\un{z}$ is $k-1$. Writing $JW(\un{x})$ as a linear combination of crossingless matchings, the only ones with nonzero contribution to \eqref{eq:reverselower} are the identity and the diagram in \eqref{rightcoeff}. The identity contributes $-[2]$ times $JW(\un{z})$, and the diagram in \eqref{rightcoeff} contributes $\frac{[k-2]}{[k-1]}$ times $JW(\un{z})$. Adding these, one obtains $\frac{[k-2] - [2][k-1]}{[k-1]} = \frac{-[k]}{[k-1]}$ times $JW(\un{z})$, as desired.

Suppose that $[k]$ is invertible. The RHS of \eqref{eq:JWhard} is obviously killed by any cup other than a cup on the final subsequence $rbr$. This final cup also kills the RHS, by
\eqref{eq:reverselower}. The coefficient of the identity in the RHS is only affected by the first term, and is therefore equal to 1. Thus the RHS of \eqref{eq:JWhard} is by definition
equal to $JW(\un{x}r)$. The statement about the orthogonal idempotent is also clear.

Now suppose that $[k]$ is not invertible. Multiplying the RHS of \eqref{eq:JWhard} by $[k]$, one obtains a map in $B$ which is well-defined. The coefficient of \eqref{rightcoeff} is
now $[k-1]$, which is invertible, but the coefficient of the identity is $[k]$, which is not invertible. If $JW(\un{x}r)$ exists then any element of $B$ is $JW(\un{x}r)$ multiplied by
the coefficient of the identity; if the coefficient of the identity is non-invertible, then every coefficient is non-invertible. This is a contradiction, so that $JW(\un{x}r)$ can not
exist. \end{proof}

\begin{remark} One could also prove this recursive formula using the usual recursive formula from \cite{Wenzl} for Jones-Wenzl projectors, combined with Proposition
\ref{prop:JWdescriptive} below. However, we felt this proof was still useful and motivational. \end{remark}

\begin{cor} \label{cor:grothSTL} Suppose that all quantum numbers are invertible. Let $V_{\un{x}}$ denote the image of $JW(\un{x})$, an indecomposable object of the Karoubi envelope
$\Kar(\STL)$. If $\un{x}$ ends in $rb$, and $\un{z}$ is the initial sequence missing only the final $b$, then one has \begin{equation} V_{\un{x}} V_{bg} \cong V_{\un{x}g}, \end{equation}
\begin{equation} V_{\un{x}} V_{br} \cong V_{\un{x}r} \oplus V_{\un{z}}. \end{equation} \end{cor}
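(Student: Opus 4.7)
The plan is to invoke Proposition~\ref{prop:JWrecursive} directly in both cases, combined with the general fact that in a Karoubi envelope, the image of a sum of orthogonal idempotents is the direct sum of the images.

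For the first isomorphism, I would first note that $\ell(bg)=2$ forces $JW(bg)=\1_{bg}$, so $V_{bg}$ coincides with the $1$-morphism $bg$. Then $V_{\un{x}}V_{bg}$ is by definition the image in $\un{x}g=\un{x}\cdot bg$ of the horizontal composition of the idempotents $JW(\un{x})$ and $\1_{bg}$. Drawn diagrammatically, this horizontal composite is $JW(\un{x})$ acting on the $\un{x}$-portion of $\un{x}g$ with an identity strand on the right for the new color $g$, which is exactly the right-hand side of~\eqref{eq:JWeasy}. Hence it equals $JW(\un{x}g)$, with image $V_{\un{x}g}$.

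For the second isomorphism, similarly $V_{br}=br$, and $V_{\un{x}}V_{br}$ is the image in $\un{x}r$ of the idempotent $JW(\un{x})\otimes\1_r$. The key claim to establish is the orthogonal decomposition
\[ JW(\un{x})\otimes\1_r \;=\; JW(\un{x}r) + e' \]
in $\End(\un{x}r)$, where $e'$ is the idempotent of~\eqref{eq:loweridempotent}. Orthogonality comes from Proposition~\ref{prop:JWrecursive}; that the sum equals $JW(\un{x})\otimes\1_r$ follows by showing that $JW(\un{x}r)$ is a sub-idempotent of $JW(\un{x})\otimes\1_r$ on both sides. For this I would expand $JW(\un{x})=\1+g$ with $g\in I_{<\un{x}}$ and use that $JW(\un{x}r)$ lies in $B(\un{x}r)\cap T(\un{x}r)$ (Claim~\ref{claim:oneD}), so the $g\otimes\1$ term is killed upon pre- or post-composition with $JW(\un{x}r)$. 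Passing to Karoubi envelopes then yields
\[ V_{\un{x}}V_{br}\;\cong\;V_{\un{x}r}\oplus\mathrm{Im}(e'). \]

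To finish, I must identify $\mathrm{Im}(e')\cong V_{\un{z}}$. By~\eqref{eq:loweridempotent} the idempotent $e'$ factors through $JW(\un{z})$, so I would write $e'=a\circ b$ with $b\co\un{x}r\to V_{\un{z}}$ (the top cap with attached $JW(\un{x})$ composed with the projection onto $V_{\un{z}}$) and $a\co V_{\un{z}}\to\un{x}r$ (the inclusion of $V_{\un{z}}$ followed by the bottom cup, scaled by $-[k-1]/[k]$, and the bottom $JW(\un{x})$). Letting $\iota\co\mathrm{Im}(e')\hookrightarrow\un{x}r$ and $\pi\co\un{x}r\twoheadrightarrow\mathrm{Im}(e')$ be the canonical maps with $\iota\pi=e'$ and $\pi\iota=\1_{\mathrm{Im}(e')}$, one then computes
\[ (\pi a)(b\iota)=\pi e'\iota=\pi\iota\pi\iota=\1_{\mathrm{Im}(e')}, \]
exhibiting $\mathrm{Im}(e')$ as a direct summand of $V_{\un{z}}$. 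Since $V_{\un{z}}$ is indecomposable by Claim~\ref{claim:oneD} and $e'$ is manifestly nonzero (being a nonzero scalar times a basis diagram), I conclude $\mathrm{Im}(e')\cong V_{\un{z}}$.

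The main obstacle is this final identification together with the upstream verification that $JW(\un{x})\otimes\1_r=JW(\un{x}r)+e'$. Both rest on the orthogonality properties built into the top idempotent by Claim~\ref{claim:oneD}; the indecomposability of $V_{\un{z}}$ then forces the image of any nonzero idempotent factoring through $V_{\un{z}}$ to be all of $V_{\un{z}}$.
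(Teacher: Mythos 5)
Your argument is correct and follows the same route as the paper: decompose $\1_{V_{\un{x}}V_{bs}}=JW(\un{x})\otimes\1$ via \eqref{eq:JWeasy} and \eqref{eq:JWhard} into orthogonal idempotents, then identify the images. One small redundancy: the identity $JW(\un{x})\otimes\1_r = JW(\un{x}r)+e'$ is literally a rearrangement of \eqref{eq:JWhard}, so the sub-idempotent verification you propose is unnecessary; the genuinely nontrivial step (which the paper elides and you supply correctly) is the retraction argument showing $\mathrm{Im}(e')\cong V_{\un{z}}$ rather than merely a summand of it, using indecomposability of $V_{\un{z}}$ and $e'\neq 0$.
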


\begin{proof} This is implied by \eqref{eq:JWeasy} and \eqref{eq:JWhard}, which give a decomposition of the identity of $V_{\un{x}} V_{bg}$ and $V_{\un{x}} V_{br}$ respectively into
orthogonal idempotents which factor through the appropriate objects. \end{proof}

\subsection{A descriptive formula for top idempotents} \label{subsec:descriptive}

The recursive formula of Proposition \ref{prop:JWrecursive} does not completely answer the question of when the map $JW(\un{x})$ exists. After all, it is possible for $JW(\un{x})$ to exist
even when $JW(\un{y})$ does not exist for an initial subsequence $\un{y} \subset \un{x}$. As an example, consider the case when $\un{x}$ is an alternating sequence, so that the question
reduces to the usual Temperley-Lieb algebra and its Jones-Wenzl projectors.

\begin{claim} \label{claim:binomials} Let $\un{x} = rbrb\ldots$ be an alternating sequence of length $k+1$. Then $JW(\un{x})$ exists if and only if the quantum binomial coefficients ${k
\brack m}$ are invertible in $\Bbbk$, for all $0 \le m \le k$. Equivalently, the Jones-Wenzl projector in the usual Temperley-Lieb algebra (on $k$ strands) exists if and only if the quantum
binomial coefficients are invertible. \end{claim}

This claim is fundamental to the theory of Temperley-Lieb algebras, but we have not been able to find it in the literature. A proof by Ben Webster can be found in the appendix.

\begin{remark} It was shown by Westbury \cite[Lemma 5]{WestburyTL} that, when all the quantum numbers $[m]$ are invertible for $0 \le m \le k$, then the Temperley-Lieb algebra is
semisimple. In other words, when these quantum numbers are invertible, then the Temperley-Lieb algebra has many idempotents, all the idempotents one can generically expect. The Jones-Wenzl
projector is just one of these idempotents, and its existence is a weaker condition. \end{remark}

\begin{example} $JW(rbrb)$ exists when $[3]$ is invertible. This can happen even when $[2]$ is not invertible (e.g., when $[2]=0$ one has $[3]=-1$), in which case $JW(rbr)$ does not exist.
\end{example}


Now we use Claim \ref{claim:binomials} to give an exact condition for whether $JW(\un{x})$ exists.

\begin{prop} \label{prop:JWdescriptive} Suppose that ${k \brack m}$ is invertible whenever $0 \le m \le k$ and $k+1$ is the length of a maximal alternating subsequence of $\un{x}$. Then
one has \begin{equation} \label{eq:JWsidebyside} 	{
	\labellist
	\small\hair 2pt
	 \pinlabel {$\un{x}$} [ ] at 75 20
	\endlabellist
	\centering
	\ig{1}{JWsidebyside}.
	} \end{equation} In this equation, the smaller rectangles which appear are the $JW$ maps associated to maximal alternating subsequences. On
the other hand, if some such ${k \brack m}$ is not invertible, then $JW(\un{x})$ does not exist. \end{prop}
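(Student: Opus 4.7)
The plan is to use Claim~\ref{claim:oneD} throughout: $JW(\un{x})$ exists precisely when $B(\un{x})$ contains an element whose coefficient of the identity is invertible, and is uniquely determined by the normalization that this coefficient equals $1$. The heart of the argument is a decomposition lemma asserting that, if $\un{x} = \un{x}_1 \circ \un{x}_2 \circ \cdots \circ \un{x}_\ell$ is the composition of its maximal alternating subsequences (adjacent pieces sharing a single boundary color), then horizontal composition induces an isomorphism of $\Bbbk$-modules
\[
B(\un{x}_1) \otimes_{\Bbbk} \cdots \otimes_{\Bbbk} B(\un{x}_\ell) \xrightarrow{\;\sim\;} B(\un{x}).
\]

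Establishing this lemma will be the main obstacle. The easy inclusion from left to right is immediate: any cup $c \co \un{y} \to \un{x}$ inserts a color pattern $s\,s'\,s$, so its endpoints lie at adjacent dots $i, i+1$ with $s_i = s_{i+2}$, placing the whole cup inside a single maximal alternating block; consequently a horizontal composition of elements of the $B(\un{x}_i)$ kills every cup in $\un{x}$ and has the same coefficient of the identity as the product of the coefficients of the identity in each factor. The hard inclusion reduces to the diagrammatic statement that every colored crossingless matching in $CM(\un{x},\un{x})$ factors horizontally as a product of matchings in the $CM(\un{x}_i, \un{x}_i)$; equivalently, no arc can have its endpoints in two different maximal alternating blocks. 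The key coloring observation is that for any vertical line $L_p$ drawn through a shared-color region (with shared color $B$), both the region containing the bottom copy of this color and the region containing the top copy have color $B$; the topmost arc crossing $L_p$ has $B$-colored region on its upper side, and peeling off crossings from the top inward (or arguing by induction on the number of crossings) forces both sides of some crossing arc to be colored $B$, contradicting the fact that the two sides of any arc in a colored crossingless matching are distinctly colored.

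Given the decomposition lemma, the identity coefficient map $B(\un{x}) \to \Bbbk$ becomes the composition of the tensor product of the identity coefficient maps $B(\un{x}_i) \to \Bbbk$ with the $\ell$-fold multiplication of $\Bbbk$. Its image is therefore the product of ideals $I_1 I_2 \cdots I_\ell \subseteq \Bbbk$, where $I_i$ denotes the image of the identity coefficient map on $B(\un{x}_i)$. Since a product of ideals in $\Bbbk$ contains a unit iff every factor equals $\Bbbk$, we conclude that $B(\un{x})$ contains an element of invertible identity coefficient iff each $B(\un{x}_i)$ does.

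Both directions of the proposition now follow. For the positive direction, assume all the relevant ${k \brack m}$ are invertible; by Claim~\ref{claim:binomials} each $JW(\un{x}_i)$ exists, and their side-by-side product $J$ lies in $B(\un{x})$ with identity coefficient $1 \cdot 1 \cdots 1 = 1$, so $J = JW(\un{x})$ by Claim~\ref{claim:oneD}, which is precisely the assertion~\eqref{eq:JWsidebyside}. For the negative direction, if some relevant ${k \brack m}$ fails to be invertible, then by Claim~\ref{claim:binomials} $JW(\un{x}_j)$ does not exist for the offending block, so $I_j \subsetneq \Bbbk$ by Claim~\ref{claim:oneD}; then $I_1 I_2 \cdots I_\ell \subseteq I_j \subsetneq \Bbbk$, so $B(\un{x})$ contains no element of invertible identity coefficient, and $JW(\un{x})$ does not exist.
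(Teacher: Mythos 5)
Your \emph{easy inclusion} (and hence the positive direction of the Proposition) is correct, and it is exactly what the paper uses: under the invertibility hypotheses the horizontal concatenation of the $JW(\un{x}_i)$ visibly lies in $B(\un{x})$, has identity coefficient $1$, and is therefore $JW(\un{x})$ by Claim~\ref{claim:oneD}.

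The \emph{hard inclusion} of your decomposition lemma, however, rests on a false combinatorial claim, and this is where your negative direction breaks down. You assert that every colored crossingless matching in $CM(\un{x},\un{x})$ factors horizontally through the maximal alternating blocks, equivalently that no arc connects strands in two different blocks. This is not true. Take $\un{x} = rbrgrbr$, with blocks $rbr\,|\,rgr\,|\,rbr$ (strands $\{1,2\}$, $\{3,4\}$, $\{5,6\}$). The matching consisting of a bottom cup from strand $2$ to strand $5$ with a nested cup from $3$ to $4$, the mirror caps on top, and through-strands $1$ and $6$, is consistently colorable (outer region $b$, middle annulus $r$, innermost $g$) and lies in $CM(\un{x},\un{x})$; the arc $2$--$5$ joins blocks $1$ and $3$. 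Your vertical-line argument also fails on this example: the colors read along a vertical line through the shared-$r$ position between strands $2$ and $3$ are $r,\,b,\,r$ from bottom to top, so no crossed arc has equal colors on both sides --- the bottom of the strip being colored $B$ forces the region \emph{just above} the topmost crossing to be $B$, but by then you have exhausted the crossings and derived nothing. Consequently $\End(\un{x})$ does not split as $\bigotimes_i \End(\un{x}_i)$, your identification of the image of the identity-coefficient map on $B(\un{x})$ with the ideal $I_1\cdots I_\ell$ is unjustified, and the negative direction is unproven.

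The paper sidesteps the need for any such structural decomposition of $B(\un{x})$. For the negative direction it argues by contradiction purely with the easy inclusion: if $JW(\un{x})$ existed, then by Claim~\ref{claim:oneD} every element of $B(\un{x})$ would equal its identity coefficient times $JW(\un{x})$, so every coefficient would be a multiple of the identity coefficient. One then horizontally concatenates a specific element $f \in B(\un{y})$ of the bad block (provided by the algebraic proof of Claim~\ref{claim:binomials}), which has noninvertible identity coefficient but an invertible coefficient of some diagram $D$, with top idempotents on the other blocks; the result lands in $B(\un{x})$ by the easy inclusion, has noninvertible identity coefficient, but invertible coefficient of $\1\ot D\ot\1$, a contradiction. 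If you wish to salvage your write-up, replace the decomposition lemma and the ideal-product argument with this concatenation-and-contradiction step, which uses only the direction of the lemma you actually proved.
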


This result should hardly be unexpected in light of Proposition \ref{prop:JWrecursive}, as this is exactly the morphism which the recursive formula of the previous section would construct.

\begin{proof} Under the assumptions of invertibility, clearly the RHS of \eqref{eq:JWsidebyside} exists, and clearly it satisfies the defining conditions of $JW(\un{x})$.

Now suppose that some ${k \brack m}$ is not invertible, where $k+1$ is the length of a maximal alternating subsequence $\un{y}$. By the algebraic proof of Claim \ref{claim:binomials},
there is some element $f$ of $B(\un{y})$ with a noninvertible coefficient of the identity, but with an invertible coefficient of some non-identity diagram $D$. Taking the horizontal
concatenation of $f$ with $JW(\un{z})$ for the other maximal alternating subsequences $\un{z}$ of $\un{x}$, one obtains an element of $B(\un{x})$ with a noninvertible coefficient of the
identity, but with an invertible coefficient of $\1 \ot D \ot \1$. This contradicts the existence of $JW(\un{x})$, using the same argument as in the end of the proof of Proposition
\ref{prop:JWrecursive}. \end{proof}

\subsection{Generalizations} \label{subsec:generalizations}

We no longer assume that $\Bbbk$ is a $\ZM[\d]$-algebra. Let $A = (a_{s,t})$ be an $S \times S$ matrix with values in $\Bbbk$. We will only be interested in the values of $a_{s,t}$ for
$s \ne t$. By convention, we set $a_{s,s}=2$, so that $A$ is a Cartan matrix (in the sense of the next chapter).

\begin{defn} Let $\STL(A)$ be the 2-category defined as in Definition \ref{defn:STL} except that instead of \eqref{eq:circle} one has \begin{equation} \label{eq:gencircle} 	{
	\labellist
	\small\hair 2pt
	 \pinlabel {$a_{b,r}$} [ ] at 60 20
	\endlabellist
	\centering
	\ig{1}{circle}.
	} \end{equation} \end{defn}

In other words, a circle still evaluates to a scalar, but which scalar depends on the color both inside and outside. When $\Bbbk$ is a $\ZM[\d]$-algebra, the special matrix with
$a_{s,t} = -\d$ for all $s \ne t$ will recover the original 2-category $\STL$.

For any two fixed colors $s \ne t \in S$, there is a notion of \emph{two-colored quantum numbers} $[m]_{s,t}$. These are defined by recursive formulae,
starting with $[0]_{s,t}=0$ and $[1]_{s,t}=1$. Then one has \begin{equation} \label{eq:2qbase} [2]_{s,t} = -a_{s,t}, \qquad \qquad [2]_{t,s} = -a_{t,s},
\end{equation} \begin{equation} \label{eq:2qrecursive} [2]_{s,t} [m]_{t,s} = [m-1]_{s,t} + [m+1]_{s,t}, \qquad \qquad [2]_{t,s} [m]_{s,t} = [m-1]_{t,s} +
[m+1]_{t,s}. \end{equation} It is not difficult to see that $[m]_{s,t} = [m]_{t,s}$ when $m$ is odd, and that when $m$ is even, $[m]_{s,t}$ is a multiple of
$[2]_{s,t}$ and $\frac{[m]_{s,t}}{[2]_{s,t}} = \frac{[m]_{t,s}}{[2]_{t,s}}$. There are many analogies between these two-colored quantum numbers and usual
quantum numbers; see \cite[Appendix]{EDihedral} for more details.

The two-colored quantum binomial coefficients ${k \brack m}_{s,t}$ are defined by the formula \begin{equation} {k \brack m}_{s,t} = \frac{[1]_{s,t} [2]_{s,t}
\cdots [k]_{s,t}}{[1]_{s,t} \cdots [m]_{s,t} [1]_{s,t} \cdots [k-m]_{s,t}}. \end{equation} By comparing the numbers of even and odd quantum numbers, it is
easy to see that ${k \brack m}_{s,t} = {k \brack m}_{t,s}$ unless $k$ is even and $m$ is odd, in which case ${k \brack m}_{s,t}$ is a multiple of $[2]_{s,t}$
and $\frac{{k \brack m}_{s,t}}{[2]_{s,t}} = \frac{{k \brack m}_{t,s}}{[2]_{t,s}}$.

The results of Claim \ref{claim:oneD} still hold. Proposition \ref{prop:JWrecursive} and its corollaries will still hold, after replacing quantum numbers with the two-colored
quantum numbers. More precisely, one should replace \eqref{eq:JWhard} with
\begin{equation} 	{
	\labellist
	\small\hair 2pt
	 \pinlabel {$\un{x}r$} [ ] at 43 53
	 \pinlabel {$\un{x}$} [ ] at 147 53
	 \pinlabel {$\un{x}$} [ ] at 273 86
	 \pinlabel {$\un{x}$} [ ] at 273 21
	 \pinlabel {$\un{z}$} [ ] at 262 53
	 \pinlabel {$\frac{[k-1]_{b,r}}{[k]_{r,b}}$} [ ] at 221 57
	\endlabellist
	\centering
	\ig{1}{JWhard}
	}. \label{eq:JWhard2q} \end{equation}
Corollary \ref{cor:grothSTL} holds once one replaces the first sentence with ``Suppose all two-colored quantum numbers are invertible, for all pairs $s \ne t \in S$." The analogue of Proposition \ref{prop:JWdescriptive} is:

\begin{cor} \label{cor:whenJWexists2q} Suppose that ${k \brack m}_{s,t}$ is invertible whenever $0 \le m \le k$ and there exists a maximal alternating
subsequence $\ldots ts \subset \un{x}$ of length $k+1$. Then $JW(\un{x})$ exists and \eqref{eq:JWsidebyside} holds. If one of these two-colored quantum
binomial coefficients is not invertible, then $JW(\un{x})$ does not exist. \end{cor}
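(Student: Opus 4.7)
The plan is to mirror the proof of Proposition~\ref{prop:JWdescriptive}, with quantum numbers and binomials replaced by their two-colored counterparts, and with Claim~\ref{claim:binomials} replaced by its two-colored analog (proved by the same algebraic argument that underlies the single-color version). The two directions of the corollary are handled separately: invertibility of all relevant binomials gives existence of $JW(\un{x})$ with the prescribed form \eqref{eq:JWsidebyside}, and failure of any such binomial gives non-existence of $JW(\un{x})$.

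For the existence direction I would assume each two-colored binomial ${k \brack m}_{s,t}$ (indexed by the maximal alternating subsequences of $\un{x}$ of length $k+1$ in colors $s,t$) is invertible. Each maximal alternating subsequence $\un{y}_i$ then satisfies the hypothesis of the two-colored Claim~\ref{claim:binomials}, producing $JW(\un{y}_i) \in B(\un{y}_i)$. I would form the horizontal concatenation $H$ of these on the right of \eqref{eq:JWsidebyside}, and argue that $H \in B(\un{x})$ via the elementary observation that every cup at the bottom of $\un{x}$ occupies three consecutive positions $s_i s_{i+1} s_{i+2}$ with $s_i = s_{i+2}$, and hence sits inside a single maximal alternating subsequence $\un{y}_j$ where it is absorbed by $JW(\un{y}_j)$. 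The identity coefficient of $H$ is the product of those of the $JW(\un{y}_i)$, namely $1$. Claim~\ref{claim:oneD} then identifies $H = JW(\un{x})$.

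For the non-existence direction I would suppose some binomial fails and construct a single element $G \in B(\un{x})$ with non-invertible identity coefficient yet a unit coefficient of some non-identity diagram, which by Claim~\ref{claim:oneD} contradicts existence of $JW(\un{x})$. For each maximal alternating subsequence $\un{z}$ of $\un{x}$ I pick $g_{\un{z}} \in B(\un{z})$ as follows: if every binomial of $\un{z}$ is invertible, set $g_{\un{z}} = JW(\un{z})$ with identity coefficient $1$; otherwise, the algebraic proof of the two-colored Claim~\ref{claim:binomials} supplies $g_{\un{z}} \in B(\un{z})$ whose identity coefficient lies in $\mg$ and whose coefficient of some non-identity diagram $D_{\un{z}}$ is a unit. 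Take $G$ to be the horizontal concatenation of the $g_{\un{z}}$; the same cup-absorption reasoning shows $G \in B(\un{x})$. Because $\Bbbk$ is local, the identity coefficient of $G$ (a product over $\un{z}$ of identity coefficients of $g_{\un{z}}$) lies in $\mg$, since at least one factor does, while the coefficient of the tensor diagram $\bigotimes_{\un{z}} D_{\un{z}}$ (setting $D_{\un{z}} = \1$ for $\un{z}$ with good binomials) is a product of units, hence a unit. Since at least one $D_{\un{z}}$ is non-identity, this tensor diagram is not the identity of $\un{x}$, yielding the contradiction.

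The hard part, or rather the input I am freely invoking, is the two-colored version of Claim~\ref{claim:binomials}: it produces $JW(\un{y}_i)$ in the existence direction and the subtle witness $g_{\un{z}}$ in the non-existence direction. Everything else is bookkeeping of horizontal tensor products of $B$-elements together with the observation that cups localize inside maximal alternating subsequences; in particular, this formulation of the non-existence argument works uniformly regardless of how many maximal alternating subsequences have failing binomials, which is a minor subtlety that the parallel proof of Proposition~\ref{prop:JWdescriptive} glosses over.
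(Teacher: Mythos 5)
Your proof is correct and follows the same strategy the paper implicitly intends for this corollary, namely a two-colored transcription of the proof of Proposition~\ref{prop:JWdescriptive}, taking the two-colored version of Claim~\ref{claim:binomials} as input. You rightly observe that you are importing that claim as a black box, which is exactly what the paper does as well. The one substantive refinement you make is in the non-existence direction: the paper's argument for Proposition~\ref{prop:JWdescriptive} concatenates the bad witness $f$ on one maximal alternating subsequence with $JW(\un{z})$ on all the \emph{others}, which tacitly presumes those other $JW(\un{z})$ exist; if more than one maximal alternating block has a failing binomial this breaks, and your device of using a non-unit-identity witness $g_{\un{z}}$ on every failing block (and $JW(\un{z})$ only on the good ones) patches this cleanly. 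Two small bookkeeping remarks: the appeal to locality of $\Bbbk$ is unnecessary, since in any commutative ring a product containing a non-invertible factor is non-invertible while a product of units is a unit, which is all the contradiction needs; and your cup-absorption observation is correct because the two strands involved in a cup correspond to three consecutive letters $s_i s_{i+1} s_{i+2}$ with $s_i = s_{i+2}$, hence lie inside a single maximal alternating block (the blocks overlap only at single letters, not at strands).
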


\section{Universal Soergel bimodules}
\label{sec:universal}

\subsection{Universal Coxeter groups and Hecke algebras} \label{subsec:coxandhecke}

The \emph{universal Coxeter group} associated to a finite set $S$ is a group $W$ with a Coxeter presentation having generators $S$ and relations $s^2 = 1$ for each $s \in S$. An
\emph{expression} is a sequence $\un{x} = s_1 s_2 \ldots s_d$ of elements of $S$; it has length $d$ and is \emph{reduced} if $s_i \ne s_{i+1}$ for any $i$. Removing the underline, we let $x$ denote the corresponding product of generators in $W$. Each element of $W$ has a unique reduced expression.

The \emph{Hecke algebra} $\HB$ of $W$ is the $\Zvv$-algebra having a presentation with generators $H_s$ for $s \in S$, and relations \begin{equation} (H_s + v) (H_s - v^{-1}) = 0
\end{equation} for each $s \in S$. It has a \emph{Kazhdan-Lusztig} or \emph{KL basis} $\{b_w\}_{w \in W}$ as a $\Zvv$-module. One has $b_1 = 1$ and $b_s = H_s + v$ for each $s \in S$. Dyer \cite[Lemma 6.1]{DyerUniversal} has shown that the KL basis is given by the following recursive formula.

\begin{prop}\label{Dyer}(The Dyer formula) If $\un{x} = s_1 s_2 \ldots r b$ is a reduced expression, $g \in S$, $g \ne r$ and $g \ne b$, then \begin{equation} \label{eq:Dyereasy} b_x b_g = b_{xg}. \end{equation}
On the other hand, \begin{equation} \label{eq:Dyerhard} b_x b_r = b_{xr} + b_{xb}. \end{equation} \end{prop}

Note that the reduced expression for $xb$ is the initial subsequence $\un{z}$ of $\un{x}$ which is missing only the final $b$.

Whenever $\un{x}$ is a reduced expression but $\un{x}s$ is not, one can show that \begin{equation} b_x b_s = (v+v^{-1}) b_x. \end{equation} Between this equation and the Dyer formula,
one can compute the product of any $b_w$ for $w \in W$ with any $b_s$ for $s \in S$.

\subsection{Realizations} \label{subsec:realizations}

\begin{defn} A \emph{realization} of a universal Coxeter group $W$ over $\Bbbk$ is a free, finite rank $\Bbbk$-module $\hg$ together with its dual $\hg^*$, a choice
of \emph{simple roots} $\{\a_s\}_{s \in S} \subset \hg^*$ and a choice of \emph{simple coroots} $\{\a_s^\vee\}_{s \in S} \subset \hg$, satisfying $\langle \a_s, \a_s^\vee \rangle =2
\in \Bbbk$. \end{defn}

The \emph{Cartan matrix} attached to a realization is the $S \times S$ matrix $A = (a_{s,t})$ with values in $\Bbbk$, given by $a_{s,t} = \langle \a_t, \a_s^\vee \rangle$. We do not
assume that the simple roots span $\hg^*$ or that the simple coroots span $\hg$, so that $A$ need not determine the realization.

We assume in this paper that our realization satisfies \emph{Demazure surjectivity}, which is the condition that $\langle \a_s, \cdot \rangle$ and $\langle \cdot, \a_s^\vee \rangle$
are both surjective maps to $\Bbbk$.

Given any realization, there is an action of $W$ on $\hg$ defined on generators by the formula $s(v) = v - \langle \a_s, v \rangle \a_s^\vee$. The contragredient action on $\hg^*$ is
given by $s(f) = f - \langle f , \a_s^\vee \rangle \a_s$. Let $R$ be the coordinate ring of $\hg$, or in other words, the $\Bbbk$-linear polynomial ring whose linear terms are $\hg^*$.
We give $R$ a grading, so that $\deg(\hg^*)=2$. The commutative ring $R$ has a natural homogeneous action of $W$.

There is a Demazure map $\pa_s \co R \to R^s$, defined by the formula
$$\pa_s=\frac{f-s(f)}{\alpha_s},$$
whose image is the set of $s$-invariant polynomials. On linear polynomials $f \in \hg^* \subset R$, one has $\pa_s(f) = \langle f,
\a_s^\vee \rangle \in \Bbbk$.

\subsection{Diagrammatics for Soergel bimodules} \label{subsec:diagrammatics}

Instead of defining and developing the theory of Soergel bimodules, we prefer to follow the diagrammatic approach developed in \cite{EWGR4SB}. Our object of study will be a certain
monoidal category with graded Hom spaces, given diagrammatically. 

\begin{defn} A \emph{Soergel graph} is a certain kind of finite graph embedded in the planar strip $\RM \times [0,1]$. The edges in this graph are colored by $s \in S$. The only vertices allowed
are trivalent vertices connecting three edges of the same color, and univalent vertices (called \emph{dots}). We also allow edges which meet no vertices, forming a circle. This graph is
allowed to have a boundary on the walls of the strip (i.e. edges may terminate at $\RM \times \{0\}$ or $\RM \times \{1\}$, though these termination points are not counted as vertices).
The edge labels that meet the boundary give two sequences of colors, the \emph{bottom boundary} and the \emph{top boundary}. A  \emph{region} of the graph is a connected component of the complement of the Soergel graph in $\RM \times [0,1]$. Finally, we may place a homogeneous polynomial in $R$ inside
each region of the graph. We consider these graphs up to isotopy, though this isotopy must preserve $\RM \times \{0\}$ and $\RM \times \{1\}$. A Soergel graph has a \emph{degree}, which
accumulates $+1$ for every dot, $-1$ for every trivalent vertex, and the degree of each polynomial. \end{defn}

In particular, the connected components of a Soergel graph have a single color. For numerous examples, look ahead.

\begin{defn} \label{defn:DC} Let $\DC$ be the monoidal category defined as follows. The objects are monoidally generated by $s \in S$, so that a general object is an expression
$\un{x}$ (not necessarily reduced, possibly empty). Given two expressions $\un{x}$ and $\un{y}$, the morphism space $\Hom(\un{x},\un{y})$ will be the $\Bbbk$-module spanned by Soergel graphs with bottom boundary $\un{x}$ and top boundary $\un{y}$, modulo the local relations below. This morphism space is graded by the degree of the Soergel graphs,
and all the relations below are homogeneous.

The \textbf{Needle relation}:
\begin{equation} \label{eq:needle} \ig{1}{needle} \end{equation}

The \textbf{Frobenius relations}:
\begin{subequations} \label{thefrobeniusrelations}
\begin{equation} \label{eq:assoc} \ig{1}{assoc1} \end{equation}	
\begin{equation} \label{eq:unit} \ig{1}{unit} \end{equation}	
\end{subequations}

The \textbf{Barbell relation}:
\begin{equation} \label{eq:barbell} {
\labellist
\small\hair 2pt
 \pinlabel {$\a_b$} [ ] at 32 12
\endlabellist
\centering
\ig{1}{barbellis}
} \end{equation}

The \textbf{Polynomial forcing relation}:
\begin{equation} \label{eq:dotforcegeneral} {
\labellist
\small\hair 2pt
 \pinlabel {$f$} [ ] at 8 12
 \pinlabel {$\pa_b(f)$} [ ] at 66 13
 \pinlabel {$b(f)$} [ ] at 120 13
\endlabellist
\centering
\ig{1}{generaldotforce}
} \end{equation}

This ends the definition. \end{defn}

We may also consider Soergel graphs on the planar disk; these have a single boundary sequence $\un{x}$, which is to be considered only up to cyclic permutation. A Soergel graph on the disk does not
represent a morphism in $\DC$. However, as the relations above are local, one can apply them to any disk within the planar strip, so disk diagrams are useful for local calculations.

Let $\Kar(\DC)$ denote the Karoubi envelope of the graded, additive closure of $\DC$. The following theorem is proven in \cite{EWGR4SB}, whose analogue for Soergel bimodules is known
as the \emph{Soergel Categorification Theorem}.

\begin{thm} \label{thm:SCT} The indecomposable objects in $\Kar(\DC)$, up to isomorphism and grading shift, can be labeled by $w \in W$. The indecomposable object $B_w$ is the unique
summand inside $\un{w}$ for a reduced expression of $w$ which is not a direct summand of $\un{y}$ for any shorter expression. There is an isomorphism of $\Zvv$-algebras \[ \HB \to
[\Kar(\DC)] \] from the Hecke algebra to the Grothendieck ring of $\Kar(\DC)$, which sends $b_s$ to the symbol of the generating object $s$, which is equal to $B_s$. \end{thm}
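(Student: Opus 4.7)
The plan is to bootstrap the theorem from the multicolored Temperley--Lieb theory of Section~\ref{sec:nTL}, using the embedding (promised as the main theorem of Section~\ref{sec:universal}) that identifies $\STL(A)$ with the minimal-degree morphisms of $\DC$, where $A$ is the Cartan matrix of the realization. Under this embedding, each $JW(\un{w})$ of Claim~\ref{claim:oneD} transports to a degree-zero idempotent in $\End_\DC(\un{w})$, whose image I define to be $B_w$. Krull--Schmidt for $\Kar(\DC)$ holds because $\Bbbk$ is complete local and the graded pieces of $\Hom$ in $\DC$ are finitely generated $\Bbbk$-modules (the latter is a standard diagrammatic calculation using the relations of Definition~\ref{defn:DC} to reduce any graph to a ``double leaves''-style spanning set). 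The indecomposability of $B_w$ is then immediate from Claim~\ref{claim:oneD}.

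First I would verify that $B_w$ does not appear as a summand of $\un{y}$ for any strictly shorter reduced expression $\un{y}$. This reduces to showing that every composite $\un{w}\to\un{y}\to\un{w}$ annihilates $JW(\un{w})$. In minimal degree, the image of the ideal $I_{<\un{w}}$ of Section~\ref{subsec:karoubi} under the embedding $\STL\hookrightarrow\DC$ agrees with the ideal of endomorphisms factoring through shorter sequences, so orthogonality $JW(\un{w})\perp I_{<\un{w}}$ does the job; in higher degree, multiplying by $JW(\un{w})$ (central in its degree and lying in the right perpendicular space) forces the composite back into $I_{<\un{w}}$ by a degree/polynomial-forcing argument using \eqref{eq:dotforcegeneral}. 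This is the main obstacle: it rests entirely on the promised identification of $\STL$ with the minimal-degree morphisms in $\DC$, together with a clean description of ``factors through shorter expressions'' in that language.

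Next I would produce the categorified Dyer formula. The recursive identity \eqref{eq:JWeasy} immediately yields $B_x \otimes B_g \cong B_{xg}$ when $g \neq r,b$, while \eqref{eq:JWhard} together with the orthogonal idempotent \eqref{eq:loweridempotent} yields $B_x \otimes B_r \cong B_{xr} \oplus B_z$, where $\un{z}$ is $\un{x}$ with the final $b$ deleted. These exactly categorify \eqref{eq:Dyereasy} and \eqref{eq:Dyerhard}. For the non-reduced case when $\un{w}s$ is not reduced (i.e.\ $\un{w}$ ends in $s$), the isomorphism $B_s\otimes B_s\cong B_s(1)\oplus B_s(-1)$ follows by an explicit idempotent decomposition using the Frobenius relations \eqref{thefrobeniusrelations} together with the barbell relation \eqref{eq:barbell}, categorifying $b_s^2=(v+v^{-1})b_s$.

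Finally I would assemble the Hecke algebra isomorphism. Define $\eta\co\HB\to[\Kar(\DC)]$ by $b_s\mapsto [s]$; the quadratic relation and Dyer-type decompositions above show $\eta$ is a well-defined $\Zvv$-algebra homomorphism. Surjectivity follows by induction on $\ell(w)$ using the decompositions of the previous paragraph, which express every $[B_w]$ in terms of products of the $[s]$. For injectivity, the $B_w(n)$ are pairwise non-isomorphic indecomposables by the previous steps, hence their symbols are $\Zvv$-linearly independent, and the decompositions above show the induction sends $b_w$ to $[B_w]$, matching two $\Zvv$-bases. The identification of indecomposables up to shift with $W$ then follows, completing the theorem.
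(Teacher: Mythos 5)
This theorem is not proven in the paper at all: the authors import it wholesale from \cite{EWGR4SB} (``The following theorem is proven in \cite{EWGR4SB}, whose analogue for Soergel bimodules is known as the \emph{Soergel Categorification Theorem}''). So there is no in-paper proof to compare with, and the right response to being asked to supply one is to cite it, not to re-derive it.

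Beyond that mismatch, your proposed derivation has a fatal circularity. Your first step invokes the embedding of $\STL(A)$ into the degree-zero part of $\DC$, i.e.\ Proposition~\ref{prop:mainprop}. But look at the proof of that proposition: essential surjectivity is deduced ``by the classification of Theorem~\ref{thm:SCT}'', and faithfulness is deduced from ``Soergel's Hom formula'', itself a part of the same \cite{EWGR4SB} package. You cannot use Proposition~\ref{prop:mainprop} to establish Theorem~\ref{thm:SCT} when the former is proved by quoting the latter. Relatedly, the step you call a ``standard diagrammatic calculation'' --- that the relations of Definition~\ref{defn:DC} reduce every graph to a double-leaves spanning set of the right graded rank, so that Hom spaces are free of the predicted rank over $\Bbbk$ --- is precisely the hard content of the diagrammatic Soergel Hom formula in \cite{EWGR4SB}, not something one can wave away.

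There is also a scope problem: you define $B_w$ as the image of $JW(\un{w})$, but Claim~\ref{claim:oneD} only produces $JW(\un{w})$ under an invertibility hypothesis, and Corollary~\ref{cor:whenJWexists2q} shows $JW(\un{w})$ genuinely fails to exist whenever a relevant two-colored quantum binomial coefficient is non-invertible. Theorem~\ref{thm:SCT}, by contrast, assumes nothing of the sort --- the indecomposable $B_w$ exists for every realization over a complete local ring, and in the degenerate cases it is the image of the idempotent $e_0$ of \S\ref{subsec:karoubi}, which is \emph{not} orthogonal to $I_{<\un{w}}$ and need not be central. Your argument (and your categorified Dyer formula, which is the paper's Theorem at the end of \S\ref{subsec:mainthm}, not Theorem~\ref{thm:SCT}) describes exactly the \emph{generic} situation that Theorem~\ref{thm:SCT} is then used to contrast against. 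Finally, the Grothendieck-ring isomorphism with $\HB$ over $\Zvv$ requires control of graded Hom spaces and grading shifts, which the purely degree-zero $\STL$ picture does not by itself provide.
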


Soergel conjectured that, when $\Bbbk$ has characteristic zero and the representation $\hg$ is ``reflection-faithful," this isomorphism sends $b_w$ to $[B_w]$. Our goal in the rest of this
paper is to give a criterion for when $b_w \mapsto [B_w]$, which will happen when we can categorify the Dyer formula.

\subsection{Maximally connected graphs and minimal degrees} \label{subsec:maxcon}

This section is an adaptation of \cite[section 5.3.3]{EDihedral}, which only treated the case of two colors. However, the arguments are almost identical.

Using the relations in Definition \ref{defn:DC}, it is not hard to show that every graph is in the span of a graph containing only \emph{simple trees} with
polynomials. In other words, each connected component of the graph is a tree with non-empty boundary. Any two trees with the same boundary are equal by
\eqref{eq:assoc} and \eqref{eq:unit}. Moreover, this tree contains a dot precisely when the boundary is a single point, in which case the tree has no
trivalent vertices; we call such a tree a \emph{boundary dot}. Moreover, one can assume that there is a single polynomial, and it occurs in a region of one's
choosing (say, the leftmost region). The proof in \cite[Proposition 5.19]{EDihedral} works verbatim.

\begin{defn} A Soergel graph containing only simple trees is \emph{maximally connected} if it has no polynomials, and satisfies the following condition for each $s \in S$. Consider the
subgraph $\Gamma$ consisting of all the edges colored by $S \setminus s$. Then $\Gamma$ splits the planar strip into regions, and each region may contain at most one connected
component colored $s$. \end{defn}

It is easy to see that maximally connected Soergel graphs with a given boundary exist, and that for any graph which is not maximally connected, one can produce a maximally connected
graph of smaller degree by ``fusing" two edges (see \cite[section 5.3.3]{EDihedral}) or removing a polynomial. However, not all maximally connected graphs have the same degree, as the following
examples show.

\igc{1}{maxconex}

Given a $S$-colored crossingless matching on the planar disk (with at least two colors), one can obtain a maximally connected Soergel graph by taking a deformation retract of each colored
region. A quick inductive argument shows that any such Soergel graph has degree $+2$. The choice of deformation retract is irrelevant, because any two trees with the same boundary are
equal.

\igc{.75}{TLtoSoergdisk}

The regions in the resulting graph (RHS) correspond to the strands in the original crossingless matching (LHS). Therefore, each region is bounded by exactly two colors, and meets the
boundary exactly twice. It is easy to recover the colored crossingless matching from the resulting graph: simply deformation retract each region into a strand, and use the colors of
the graph to color the regions between strands.

\begin{prop} \label{prop:degreeofmaxcon} Let $\un{x} = s_1 s_2 \ldots s_d$ be a sequence representing the boundary of a Soergel graph on the planar disk (so that we only consider
$\un{x}$ up to rotation). Then any maximally connected Soergel graph on the disk with boundary $\un{x}$ has degree $\ge 2-m$, where $m$ is the number of repetitions in $\un{x}$ (i.e.
the number of $1 \le i \le d$ such that $s_i = s_{i+1}$, where we set $s_{d+1}=s_1$). If $\un{x}$ has no repetitions and the graph has degree $2$, then it arose as the deformation
retract of some colored crossingless matching. \end{prop}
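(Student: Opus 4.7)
The plan is to recast the desired bound as a counting problem in planar graph theory. To start I would observe that any maximally connected Soergel graph $G$ with boundary $\un{x}$ of length $d$ consists entirely of simple trees: a tree with $k$ boundary leaves contributes either $k-2$ trivalent vertices (when $k\ge 2$) or a single dot (when $k=1$), with degree contribution $2-k$ in both cases. Summing over the $c$ components gives $\deg(G) = 2c - d$. Combined with $V_G - E_G = c$ (since each tree satisfies $V-E=1$) and Euler's formula $V - E + F = 1$ for the disk (viewing the $d$ boundary arcs as edges of the planar decomposition), this yields $F = d - c + 1$ inner faces, so the desired bound $\deg(G) \ge 2-m$ is equivalent to $F \le (d+m)/2$.

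The crucial input from maximal connectedness is the following: if two trees $T, T'$ of the same color $s$ shared a face of $G$, then deleting the color-$s$ edges would merge that face with the erased trees, placing two color-$s$ components into a single region and violating maximality. Thus adjacent trees must carry distinct colors. From this I deduce that if a face $F$ has a single boundary arc $(p_i, p_{i+1})$, then traversing $\partial F$ takes us along this arc and then back along a path in the forest, which lies in a single tree; hence $p_i, p_{i+1}$ are in the same tree, forcing $s_i = s_{i+1}$. Writing $a(F)$ for the number of boundary arcs on face $F$ and $n_1$ for the number of one-arc faces, the assignment sending each one-arc face to its arc is an injection into the $m$ repetition arcs of $\un{x}$, so $n_1 \le m$. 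Since $G$ is a forest, no face is arc-free, and we have
\[ d = \sum_F a(F) = 2F + \sum_F (a(F)-2) \ge 2F - n_1 \ge 2F - m, \]
which gives $F \le (d+m)/2$ and hence $c \ge 1 + (d-m)/2$, as required.

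For the second statement ($m=0$ and $\deg(G)=2$), equality throughout forces $a(F) = 2$ for every face. I would then reconstruct a crossingless matching by placing a marked point inside each of the $d$ boundary arcs and, inside each face $F$ with arcs $\alpha, \beta$, drawing a strand joining the two marked points. The strands sit inside disjoint faces and are therefore non-crossing; each marked point is the endpoint of a unique strand; and coloring each region of the resulting matching by the color of the corresponding tree is consistent, because adjacent matching regions come from adjacent trees, which carry distinct colors by the key implication above. Since any two simple trees with the same boundary agree by the Frobenius relations, the deformation retract of this matching recovers $G$ exactly. The main obstacle I anticipate is the key implication --- correctly extracting from the maximal connectedness axiom that each one-arc face is charged to a distinct color repetition. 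Once this is secured, the bound is a direct double-count and the equality case amounts to inverting the deformation retract face by face.
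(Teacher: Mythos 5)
Your proof is correct, and it takes a genuinely different route from the paper's. Both arguments hinge on the same local observation: if a face touches the boundary circle in a single arc, the remainder of its boundary is a connected walk in the forest, hence lies in a single monochromatic tree, forcing $s_i = s_{i+1}$; and two distinct trees bordering a common face must carry distinct colors by the maximal-connectedness axiom. Where you diverge is in how this local fact is turned into the global degree bound. The paper first reduces (without much detail) to the case $m=0$, shows every region meets the boundary at least twice, and then handles regions meeting the boundary $k\ge 3$ times by an inductive cut-and-paste argument that splits the disk into $k$ subdiagrams. You instead compute $\deg(G)=2c-d$ directly from the tree decomposition, invoke Euler's formula to get $F=d-c+1$, and convert the bound into the face count $F\le (d+m)/2$, which then follows from a one-line double count of boundary arcs. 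This buys you a uniform treatment of all $m$ at once (no reduction step, no induction) and makes the equality analysis transparent: equality forces $a(F)=2$ for every face, after which the reconstruction of the crossingless matching is immediate. The paper's inductive approach is closer in spirit to the cut-and-induct arguments elsewhere in the Soergel literature, and the picture of cutting at a high-valence region is perhaps more visual, but your global count is cleaner and proves the same thing. One small remark: the observation that a one-arc face forces a repetition is purely topological (it only uses that trees are connected and monochromatic), so the inequality half of the proposition actually holds for any polynomial-free graph of simple trees; maximal connectedness is genuinely needed only for the equality/reconstruction half, where you use that the two trees bordering a two-arc face have distinct colors. Both your write-up and the paper leave implicit the fact that a single tree cannot border the same face twice; this is true for forests in the disk, but it is worth noting if you flesh out the reconstruction.
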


\begin{proof} It is easy to reduce to the case where $\un{x}$ has no repetitions.

The maximally connected graph splits the disk into regions. Since there are no cycles, each region must meet the boundary of the disk at least once, say between $s_i$ and $s_{i+1}$.
Suppose that a region $X$ meets the boundary of the disk $k$ times, so that there are distinct indices $i_1, i_2 ,\ldots, i_k$ such that $X$ meets the boundary between $s_{i_j}$ and
$s_{i_j + 1}$. By following the walls of $X$ we see that the colors $s_{i_j + 1}$ and $s_{i_{j+1}}$ are equal, and the maximally connected condition implies that the $k$ colors
$s_{i_j}$ are all distinct. Therefore, the number of bordering colors of a region is equal to the number of times that region meets the boundary. This number is always at least 2,
since there are no repetitions in $\un{x}$.

If each region meets the boundary exactly twice, then we can deformation retract the regions into strands to obtain a colored crossingless matching, as above. It is easy to see that
this yields a bijection between $S$-colored crossingless matchings, and maximally connected graphs where each region meets the boundary exactly twice.

If a region meets the boundary $k$ times, one can use this region to cut the overall graph into $k$ subgraphs, each of which is either a boundary dot, or a maximally connected graph
with one repetition. This is illustrated in the picture below, for a region $X$ meeting the boundary 4 times. Using induction therefore, each subgraph has degree at least $+1$. If any
region meets the boundary $\ge 3$ times, it follows that the overall degree of the original graph is $\ge 3$. \end{proof}

\igc{1}{splitthedisk}

Proposition \ref{prop:degreeofmaxcon} allows one to place a lower bound on the degree of the Hom space between two objects in $\DC$.

\begin{cor} \label{cor:lowerbound} Let $\un{x} = s_1 \ldots s_d$ and $\un{y} = t_1 \ldots t_k$ be two nonempty reduced expressions. If $s_1 = t_1$ and $s_d = t_k$ then every nonzero
morphism in $\Hom_{\DC}(\un{x},\un{y})$ has degree $\ge 0$. Otherwise, every nonzero morphism has degree $\ge 1$. Similarly, nonzero morphisms in $\Hom_{\DC}(\un{x},\emptyset)$ and
$\Hom_{\DC}(\emptyset,\un{y})$ have degree $\ge 1$, while every nonzero morphism in $\Hom_{\DC}(\emptyset,\emptyset)$ has degree $\ge 0$. \end{cor}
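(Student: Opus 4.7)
The plan is to deduce all four bounds from Proposition~\ref{prop:degreeofmaxcon} after compactifying the planar strip $\RM \times [0,1]$ to a disk. First I would invoke the reduction recalled at the start of \S\ref{subsec:maxcon}: every morphism in $\DC$ is a $\Bbbk$-linear combination of simple-tree-with-polynomial graphs, and since all relations are homogeneous, every summand has the same degree as the morphism itself. It therefore suffices to bound the degree of a single such graph $\Gamma$ with bottom boundary $\un{x}$ and top boundary $\un{y}$.

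Viewing $\Gamma$ on the closed-up disk, its circular boundary word reads $s_1 s_2 \cdots s_d t_k t_{k-1} \cdots t_1$. Because $\un{x}$ and $\un{y}$ are reduced, consecutive letters in this cyclic word can agree only at the two ``corners'': between $s_d$ and $t_k$ on the right, and between $t_1$ and $s_1$ on the left. Let $m \in \{0,1,2\}$ denote the number of corner repetitions, so $m = 2$ exactly when both $s_1 = t_1$ and $s_d = t_k$.

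Now I would apply the fusion procedure to convert $\Gamma$ to a maximally connected graph $\Gamma'$ with the same boundary. A short Euler characteristic computation shows that any simple tree with $b$ strip-boundary points has degree $2-b$ regardless of its internal structure, so fusing two simple-tree components lying in a common region of the $S \setminus s$-subgraph decreases the total degree by $2$, and deleting a polynomial in $R$ drops a non-negative amount. Thus $\deg(\Gamma') \le \deg(\Gamma)$, and Proposition~\ref{prop:degreeofmaxcon} gives $\deg(\Gamma') \ge 2 - m$. Hence $\deg(\Gamma) \ge 2-m$, which unpacks to the stated bounds: $\deg \ge 0$ when both corners match, and $\deg \ge 1$ otherwise.

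The one-sided empty cases are identical, with the disk boundary reading just $\un{x}$ (resp.\ $\un{y}$) closed up at a single seam, so the only possible repetition lies between the last and first letter, forcing $m \le 1$ and $\deg \ge 1$. For $\Hom_{\DC}(\emptyset,\emptyset)$, a graph with no boundary contains no simple trees (since simple trees require non-empty boundary), and hence reduces to a polynomial in $R$, which lies in non-negative degree. The main obstacle is technical rather than conceptual: one must verify carefully that the fusion procedure strictly decreases degree at each step and terminates in a maximally connected graph, and that ``same region'' in the multicolored setting is taken with respect to the complement of \emph{all} other colors simultaneously. These points are direct adaptations of the two-color argument of \cite[\S5.3.3]{EDihedral}, with only notational differences.
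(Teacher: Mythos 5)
Your proof is correct and follows essentially the same route as the paper: close the strip up to a disk, read the cyclic boundary word $\un{x}\,\un{y}^{\op}$, observe that reducedness confines repetitions to the two seams, and invoke Proposition~\ref{prop:degreeofmaxcon} after reducing to maximally connected graphs. You spell out a bit more of the bookkeeping (the Euler-characteristic degree count for simple trees and the handling of $\Hom_{\DC}(\emptyset,\emptyset)$, which the paper leaves implicit), but the underlying argument is the same.
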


\begin{proof} Let $\un{y}^{\op}$ denote the sequence $\un{y}$ in reverse. Viewing $\un{x} (\un{y}^{\op})$ as a long sequence on the circle, there is one repetition if $s_1=t_1$, and
one repetition if $s_d = t_k$. By Proposition \ref{prop:degreeofmaxcon}, the minimal degree of any map is at least $2$ minus the number of repetitions. Similarly, if $\un{y}$ is empty,
then $\un{x}$ has at most one repetition, if $s_1 = s_d$. \end{proof}

Given an $S$-colored crossingless matching on the planar strip, one can obtain a maximally connected Soergel graph of degree $0$ by deformation retract, as in the example below.

\igc{1}{TLtoSoerg}

\begin{cor} \label{cor:TLknows0} Every degree zero map between nonempty reduced expressions in $\DC$ arises from an $S$-colored crossingless matching on the planar strip. \end{cor}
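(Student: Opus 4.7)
The plan is to rewrite an arbitrary degree zero morphism $\phi \co \un{x} \to \un{y}$ as a $\Bbbk$-linear combination of maximally connected Soergel graphs with polynomials, and then to use Proposition \ref{prop:degreeofmaxcon} to force each summand to be the deformation retract of a crossingless matching. Before carrying out the reduction, I dispose of the boundary case: if $s_1 \ne t_1$ or $s_d \ne t_k$, then Corollary \ref{cor:lowerbound} gives that every nonzero morphism has degree at least $1$, so the statement is vacuous. Assume henceforth $s_1 = t_1$ and $s_d = t_k$.

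Following the discussion at the start of Section \ref{subsec:maxcon} (adapted from \cite[Section 5.3.3]{EDihedral}), I would rewrite $\phi$ as a $\Bbbk$-linear combination of maximally connected Soergel graphs, each carrying a single polynomial in a designated region, using the Frobenius relations \eqref{eq:assoc}, \eqref{eq:unit}, the needle relation \eqref{eq:needle}, the barbell relation \eqref{eq:barbell}, and polynomial forcing \eqref{eq:dotforcegeneral}. Because all of these relations are homogeneous, every summand has degree zero.

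For each summand, let $p \ge 0$ denote its polynomial degree and $d_g$ its underlying graph degree, so that $p + d_g = 0$. Viewing the summand on the closed disk obtained by joining the strip along its two walls, the cyclic boundary word $\un{x}\,\un{y}^{\op}$ has exactly two repetitions, one at $s_d t_k$ and one at $t_1 s_1$. Proposition \ref{prop:degreeofmaxcon} then yields $d_g \ge 2 - 2 = 0$, whence $p = d_g = 0$: each summand is a scalar multiple of a maximally connected graph of degree zero. A direct application of the tree-degree formula $d_g = 2T - n$ (with $T$ the number of trees and $n$ the total strip-boundary length) forces every tree to have exactly two strip-boundary points; up to the Frobenius unit relation \eqref{eq:unit}, each such tree is a single strand pairing two boundary points of equal color, and planarity makes the collection of strands a crossingless matching whose $S$-coloring is determined by $\un{x}$ and $\un{y}$. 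This is exactly the deformation retract of such a matching, as desired.

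The principal obstacle is the first rewriting step: explicitly expressing $\phi$ as a combination of maximally connected graphs with polynomials, rather than merely using such graphs to extract a degree lower bound. The authors defer this reduction to an adaptation of the two-color case \cite[Section 5.3.3]{EDihedral}, and it demands careful bookkeeping with polynomial forcing to propagate polynomials across color barriers while fusing parallel edges via Frobenius, without disturbing the global structure of the graph. Once this rewriting is secured, the remaining degree accounting against Proposition \ref{prop:degreeofmaxcon} is routine.
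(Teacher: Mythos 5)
Your general strategy is the one the paper has in mind: reduce to simple-tree graphs with a single polynomial, invoke a degree lower bound to kill the polynomial and force maximal connectedness, then read off the crossingless matching. The bookkeeping in steps 1--4 is essentially sound (modulo a harmless misstatement: the reduction in \S\ref{subsec:maxcon} produces simple-tree graphs with polynomials, not maximally connected ones; the degree bound of Proposition \ref{prop:degreeofmaxcon} then propagates to simple-tree graphs via the fusing/polynomial-removal argument, after which degree $0$ forces maximal connectedness). The genuine gap is in your final step.

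You claim that the tree-degree count $d_g = \sum_i (2 - n_i) = 2T - n = 0$ forces every tree to have exactly two boundary points, and then identify the trees with strands of a crossingless matching. Both parts of this are wrong. First, $\sum_i (2-n_i)=0$ only says the \emph{average} of the $n_i$ is $2$: for $\un{x}=\un{y}=rsr$ the cup-cap in $CM(\un{x},\un{y})$ retracts to a Soergel graph with two $s$-colored boundary dots ($n_i=1$ each) and one $r$-colored tree with four boundary points ($n_i=4$), which is maximally connected of degree $0$ but has no tree with two boundary points. Second, and more fundamentally, the trees of the Soergel graph are not the strands of the matching; they are the deformation retracts of the colored \emph{regions} of the matching. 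A TL diagram with $m$ strands on the disk has $m+1$ regions, so the Soergel graph has $m+1$ trees, and a region bordered by $k$ arcs retracts to a tree with $k$ boundary points. Your argument, taken literally, would exclude the valid degree-zero cup-cap retract above. The correct final step is the region count in the proof of Proposition \ref{prop:degreeofmaxcon}: for a maximally connected graph, each region meets the boundary of the disk exactly as many times as it has bordering colors; the degree accounting (cutting at a region meeting the boundary $\ge 3$ times and inducting) shows degree $\ge 3$ unless every region meets the boundary exactly twice, in which case the graph retracts from a crossingless matching. This uses the maximal connectedness hypothesis in an essential way that your tree-count does not.
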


The lower bound in Corollary \ref{cor:lowerbound} can also be obtained from Soergel's Hom formula. The advantage of this approach is an explicit description of the morphisms in the lowest
degree. This description can also be obtained, with some work, from the second author's light leaves basis for Hom spaces.

\subsection{The main theorem} \label{subsec:mainthm}

\begin{prop} \label{prop:mainprop} Let $A$ be the Cartan matrix of the realization. There is a non-monoidal functor from $\STL(A)$ (viewed as a 1-category) to $\DC$, which sends the
1-morphism $\un{x}$ in $\STL(A)$ to the object $\un{x}$ in $\DC$, and sends a 2-morphism in $\STL(A)$ corresponding to an $S$-colored crossingless matching to the corresponding degree $0$
deformation retract. This functor is essentially surjective (in the Karoubi envelope) and fully faithful onto maps of degree $0$. \end{prop}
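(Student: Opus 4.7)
The plan is to construct $F$ on generators, verify the defining $\STL(A)$ relations, and then deduce the two categorical properties.

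First, put $F(\un{x}) = \un{x}$ on objects. On the generating 2-morphisms, send a colored cup $b \to brb$ to the deformation-retracted Soergel graph consisting of a $b$-colored trivalent vertex together with a dangling $r$-colored dot, and send a colored cap $brb \to b$ to its upside-down mirror, as in the recipe of \S4.4. Each of these generating images is a degree-zero Soergel graph. The two families of $\STL(A)$ relations then need to be checked. The isotopy relations \eqref{eq:isotopy} translate, under deformation retract, into rearrangements of simple trees and boundary dots that are immediate consequences of the planar isotopy built into $\DC$ together with the Frobenius relations \eqref{eq:assoc}--\eqref{eq:unit}. The circle relation \eqref{eq:gencircle}, which requires $\text{cap}\circ\text{cup} = a_{b,r}\cdot\id_b$ in the image, is the one nontrivial check: composing $F(\text{cap})\circ F(\text{cup})$ in $\DC$ produces a $b$-colored bigon (two $b$-trivalents joined by two parallel $b$-edges) enclosing an $r$-colored barbell, which by the barbell relation \eqref{eq:barbell} becomes the polynomial $\alpha_r$ sitting in the bigon region. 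Applying polynomial forcing \eqref{eq:dotforcegeneral} to slide $\alpha_r$ across a $b$-edge of the bigon, and then collapsing the resulting broken bigon using the Frobenius unit relation \eqref{eq:unit}, one extracts the scalar $\partial_b(\alpha_r) = \langle\alpha_r,\alpha_b^\vee\rangle = a_{b,r}$ in front of $\id_b$, as required.

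With $F$ well-defined, the next task is to prove that it is bijective onto the degree-zero part of Hom. Surjectivity is essentially the content of Corollary~\ref{cor:TLknows0}: every degree-zero morphism between nonempty reduced expressions in $\DC$ is a linear combination of deformation retracts of $S$-colored crossingless matchings, with the boundary cases of empty expressions covered by Corollary~\ref{cor:lowerbound}. For injectivity, the $\Bbbk$-basis $CM(\un{x},\un{y})$ of $\Hom_{\STL(A)}(\un{x},\un{y})$ is sent to a spanning set of the degree-zero part of $\Hom_{\DC}(\un{x},\un{y})$; to conclude linear independence I would invoke Soergel's Hom formula, or equivalently the second author's light leaves basis of $\Hom_{\DC}$ restricted to degree zero, which has $\Bbbk$-rank exactly $|CM(\un{x},\un{y})|$. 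Combined with surjectivity, this dimension match forces bijectivity.

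Essential surjectivity of $F$ in the Karoubi envelope then follows formally. By Theorem~\ref{thm:SCT}, every indecomposable $B_w \in \Kar(\DC)$ is a summand of $\un{w}$ for some reduced expression, and $\un{w}$ is manifestly in the image of $F$. The idempotent projector cutting out $B_w$ from $\un{w}$ lies in $\End_{\DC}(\un{w})$ and is automatically of degree zero, so by the previous paragraph it lifts uniquely to an idempotent in $\End_{\STL(A)}(\un{w})$, realizing $B_w$ as an object of $\Kar(\STL(A))$ that maps to $B_w$.

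The main obstacle will be the circle-relation check in the first step, because it is the only place where the Cartan data of the realization enters the functor, and identifying the bigon-enclosing-barbell diagram with $a_{b,r}\cdot\id_b$ requires carefully combining three families of Soergel relations (Frobenius, barbell, and polynomial forcing) so that the Demazure coefficient $\partial_b(\alpha_r)$ is extracted correctly and the residual empty-bigon-with-$b(\alpha_r)$ term is shown to cancel. The dimension count for injectivity is conceptually clean but leans on the nontrivial external input of Soergel's Hom formula or the light leaves basis; should that feel too heavy, an alternative is to argue injectivity directly by combinatorially inverting the deformation retract, using the maximally-connected analysis of Proposition~\ref{prop:degreeofmaxcon} to recover the underlying colored crossingless matching from any degree-zero Soergel graph.
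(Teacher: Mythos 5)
Your proposal is correct and follows essentially the same route as the paper: define the functor on generators via deformation retract, verify the isotopy relations via the Frobenius relations and the circle relation via $\partial_b(\alpha_r)=a_{b,r}$, obtain fullness in degree zero from Corollary~\ref{cor:TLknows0}, faithfulness from a dimension count via Soergel's Hom formula, and essential surjectivity from Theorem~\ref{thm:SCT}. You supply more detail on the circle check than the paper (which leaves it as an exercise); the one ingredient missing from your list of relations used there is the needle relation, which is what kills the residual empty-bigon-with-$b(\alpha_r)$ term.
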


\begin{proof} First we must show that this deformation retract map preserves the algebra structure. It is easy to check \eqref{eq:isotopy}. Relation \eqref{eq:gencircle} follows from
$\pa_b(\a_r) = a_{b,r}$, a deduction we leave as an exercise to the reader. Thus the functor is well-defined. Corollary
\ref{cor:TLknows0} implies that it is full onto degree $0$. Soergel's Hom formula implies that the dimensions of Hom spaces agree, and thus it is also faithful. By the classification of
Theorem \ref{thm:SCT}, each indecomposable object in $\Kar(\DC)$ appears as a summand of an object in the image of the functor, so the functor is essentially surjective. \end{proof}

This non-monoidal functor can be upgraded to a genuine 2-functor to the 2-category of singular Soergel bimodules. For definitions and a proof in the two-color case, see \cite[section 5.2.3]{EDihedral}.

By Proposition \ref{prop:mainprop}, every idempotent in $\End_{\DC}(\un{x})$ for a nontrivial reduced expression comes from an idempotent in $\End_{\STL(A)}(\un{x})$. In particular, the
theory of top idempotents implies that, when $JW(\un{x})$ exists, it must project to the indecomposable object $B_x \in \Kar(\DC)$. Our main theorem is now immediately implied by Corollary
\ref{cor:grothSTL}.

\begin{thm} Suppose that all two-colored quantum numbers are invertible in $\Bbbk$. Then for a reduced expression $\un{x} = \ldots r b$ and a simple reflection $s \in S$ one has the
following isomorphism in $\Kar(\DC)$, which categorifies the Dyer formula. As a consequence, the map $\HB \to [\Kar(\DC)]$ sends $b_w \mapsto [B_w]$.
\begin{equation}\label{Soso} B_x B_s \cong \begin{cases} B_x(1) \oplus B_x(-1) & \textrm{if} \; s=b, \\ B_{xs} & \textrm{if} \; s \ne r,b, \\ B_{xr} \oplus B_{xb} & \textrm{if} \; s=r. \end{cases}
\end{equation} \end{thm}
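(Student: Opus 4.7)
The plan is to transport the idempotent decompositions in $\Kar(\STL(A))$ into $\Kar(\DC)$ via the functor of Proposition \ref{prop:mainprop}, and to supplement this with a separate treatment of the non-reduced case $s = b$. Since all two-colored quantum numbers are invertible, Corollary \ref{cor:whenJWexists2q} guarantees that $JW(\un{y})$ exists in $\STL(A)$ for every reduced expression $\un{y}$. The remark immediately preceding the theorem identifies the image of $JW(\un{y})$ under the functor with an idempotent in $\End_{\DC}(\un{y})$ projecting onto the indecomposable $B_y$; so in $\Kar(\DC)$ we may identify $V_{\un{y}}$ with $B_y$ for every reduced expression $\un{y}$.

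For the cases $s \neq b$, I would directly invoke the two-colored analogue of Corollary \ref{cor:grothSTL}. When $s \neq r, b$, equation \eqref{eq:JWeasy} gives the decomposition $V_{\un{x}} V_{bs} \cong V_{\un{x} s}$ in $\Kar(\STL(A))$, and when $s = r$, equation \eqref{eq:JWhard2q} gives $V_{\un{x}} V_{br} \cong V_{\un{x} r} \oplus V_{\un{z}}$, where $\un{z}$ is the reduced expression for $xb$. Pushing these orthogonal idempotent decompositions through the functor of Proposition \ref{prop:mainprop} and using the identification $V_{\un{y}} = B_y$ immediately yields the two stated isomorphisms in $\Kar(\DC)$.

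For the remaining case $s = b$, the expression $\un{x} b$ is not reduced, so the Temperley-Lieb picture does not apply. Instead I would rely on the standard diagrammatic decomposition $B_b B_b \cong B_b(1) \oplus B_b(-1)$ in $\Kar(\DC)$, obtainable directly from a dot together with its adjoint via the Frobenius relations \eqref{thefrobeniusrelations} and the barbell relation \eqref{eq:barbell}. Writing $\un{x} = \un{y} b$ with $\un{y}$ reduced, $B_x$ is a summand of $\un{y} \cdot b$, so $B_x B_b$ is a summand of $\un{y} \cdot b \cdot b \cong (\un{y} \cdot b)(1) \oplus (\un{y} \cdot b)(-1)$. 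The Hecke algebra identity $b_x b_b = (v+v^{-1}) b_x$ together with Theorem \ref{thm:SCT} and the Krull--Schmidt property then forces $B_x B_b \cong B_x(1) \oplus B_x(-1)$ by a multiplicity count.

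The consequence $b_w \mapsto [B_w]$ for every $w \in W$ then follows by induction on $\ell(w)$, using that the three displayed isomorphisms descend in $[\Kar(\DC)]$ to the Dyer formula together with the non-reduced relation $b_x b_b = (v+v^{-1}) b_x$. The main obstacle is the case $s = b$: it sits outside the reach of the Temperley-Lieb functor and requires the separate argument above using the fundamental $B_b B_b$ decomposition; everything else is a direct bookkeeping transfer of previously established machinery.
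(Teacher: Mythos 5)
Your treatment of the cases $s \neq b$ is the same as the paper's: both transport the recursion for top idempotents (Proposition \ref{prop:JWrecursive}, two-colored version, and Corollary \ref{cor:grothSTL}) through the functor of Proposition \ref{prop:mainprop}, using the identification of $V_{\un{y}}$ with $B_y$ supplied by the discussion just before the theorem. That part is fine.

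Where you diverge from the paper is the case $s = b$. The paper simply observes that the general statement $B_x B_b \cong B_x(1)\oplus B_x(-1)$ for $xb < x$ reduces to the special case $x=b$ and cites \cite{EWGR4SB}, where this categorical fact is established directly (it does not reduce to a trivial ``tensor with the $bb$ splitting'' argument, since the idempotent $JW(\un{x})\otimes \mathrm{id}_b$ overlaps the $bb$ idempotents on a strand). You instead try to deduce the $s=b$ isomorphism by a Grothendieck-group multiplicity count. This is a legitimately different route, but as written it has a circularity: to conclude from Theorem \ref{thm:SCT} and $b_x b_b=(v+v^{-1})b_x$ that $[B_x B_b]=(v+v^{-1})b_x$ and hence that only shifted copies of $B_x$ can appear, you need to already know $[B_x]=b_x$, and you present $b_w\mapsto[B_w]$ only at the end as a consequence of all three isomorphisms. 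The fix is to reorder the induction: first prove the two $s\neq b$ isomorphisms for all lengths (these need only the Temperley--Lieb functor, not the Grothendieck-group identification), deduce $[B_w]=b_w$ for every $w$ by induction via the Dyer formula, and only then run your Krull--Schmidt multiplicity count for $s=b$. With that reordering your argument goes through, but it is substituting a Grothendieck-group argument for a categorical fact that the paper takes as already proven, which is both slightly weaker in spirit (it does not by itself exhibit the splitting idempotents) and requires the extra inductive bookkeeping above.
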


\begin{proof} The case when $s = b$ follows from the case when $x=s$, and was proven in \cite{EWGR4SB}. The other cases follow from the recursion formula for top idempotents.
\end{proof}

Moreover, when certain two-colored quantum binomial coefficients vanish, it is immediately clear for which $w$ the statement that $b_w \mapsto [B_w]$ will fail, as in Corollary
\ref{cor:whenJWexists2q}.

\begin{remark} The vanishing of two-colored quantum numbers determines which Coxeter quotient of $W$ acts faithfully on $\hg$, as was discussed in \cite[Appendix]{EDihedral}. In
particular, if the universal Coxeter group $W$ acts faithfully on $\hg$, then all two-colored quantum numbers are nonzero. Therefore, $b_w \mapsto [B_w]$ is always satisfied for a
faithful realization over a field $\Bbbk$. \end{remark}

\begin{remark} \label{rmk:typical} The typical crystallographic setting for universal Coxeter groups is the realization where $\Bbbk = \ZM$ and $a_{s,t} = -2$ for all $s \ne t$. In this
case, the two-colored quantum number $[m]_{s,t}$ is equal to the integer $m$. Specializing to a field of finite characteristic, it is clear which binomial coefficients vanish.
\end{remark}

\begin{remark} It is not difficult, using Soergel's Hom Formula (see \cite{EWGR4SB}), to extend these results to those reduced expressions $\un{x}$ in arbitrary Coxeter groups $W$ for
which every element $y \le x$ in $W$ has a unique reduced expression. For general Coxeter groups, morphisms are not spanned by univalent-trivalent Soergel graphs as above, requiring a
more complicated definition of Soergel graphs. However, the Hom formula implies that univalent-trivalent graphs are sufficient for these kinds of expressions. \end{remark}

\begin{remark} In unpublished work, the second author proved Soergel's conjecture for universal Coxeter groups when $\Bbbk=\RM$, while considering the wider study of ``large" Coxeter
groups. The proof involved singular Soergel bimodules, and produced indecomposable bimodules using a formula analogous to \eqref{eq:JWsidebyside}. For more details, contact the second
author. \end{remark}

The isomorphism \eqref{Soso} categorified what we have called the Dyer formula. However, Dyer \cite{DyerUniversal} produces several other formulas for Hecke algebras of universal Coxeter
groups. Most of these can also be deduced fairly easily from our main theorem. Let us sketch the connections here; to go into any more depth would require too much notation, but the avid
reader should be able to draw the correct conclusion. We assume below that all two-colored quantum numbers are invertible.

Dyer's formula \cite[(3.12)]{DyerUniversal} deals with the decomposition of $B_v B_w$ into indecomposables. There are two cases. In the first case, $\ell(v) + \ell(w) = \ell(vw)$.
Proposition \ref{prop:JWdescriptive} implies that placing $JW_v$ next to $JW_w$ will result in $JW_{vw}$, except when the tail of $v$ overlaps with the ``head" (i.e. maximal alternating
initial subsequence) of $w$ to produce a longer alternating subsequence of $vw$. What happens to the overlap in this case is exactly what happens in the representation theory of $\sl_2$
when one takes the tensor product of two irreducible modules; it is governed by the Clebsch-Gordan formula. One can observe that Dyer's sums $C(w,i)$ are essentially a reformulation of the
Clebsch-Gordan formula. In the second case, $\ell(v) + \ell(w) > \ell(vw)$, because some simple reflection $s$ appears on the right of $v$ and the left of $w$. In this case, a factor of
$v+v^{-1}$ will appear in the middle, as in the decomposition $B_s B_s \cong B_s(1) \oplus B_s(-1)$. This feature of $\DC$ has nothing to do with the multicolored Temperley-Lieb category,
dealing instead with the morphisms of non-zero degree in the Soergel category, but it is amply discussed in other papers (e.g. \cite{EWGR4SB}). Taking this into account, it is easy to
reduce to the first case.

Dyer's formula \cite[(4.1)]{DyerUniversal} computes the relative KL polynomial $P_{xw}^y$. Let us discuss the case when $y=1$, which gives the ordinary KL polynomial $P_{x,w}$. We assume
that the reader is familiar with the light leaves basis for morphisms in $\DC$, and the corresponding notation for subexpressions (see \cite{EWGR4SB}). Let $\un{w}$ be the unique reduced
expression for $w$. We say that a subexpression $\eb$ of $\un{w}$ contains a \emph{pitchfork} if it contains the configuration $sts$ with $s \ne t \in S$, where $t$ is U0 and the second $s$
is either D1 or D0. The corresponding light leaves map contains a pitchfork, as in \cite[(5.17)]{EWGR4SB}. This is the morphism in $\DC$ corresponding to a cap in the multicolored
Temperley-Lieb category, and thus will be killed by precomposition with the Jones-Wenzl idempotent. We call a subexpression \emph{pitchfork-avoiding} if it does not contain a pitchfork, and
thus its light leaves map survives precomposition with the Jones-Wenzl idempotent; a standard localization argument shows that the pitchfork-avoiding light leaves maps remain linearly
independent after this precomposition. It remains to observe that Dyer's set $\PC_w(1,x)$ is precisely the set of subexpressions $\eb$ of $\un{w}$ which express $x$, and which are
pitchfork-avoiding. Moreover, Dyer's number $\rho_w(\eb)$ is a computation of the defect of the light leaves map (up to an overall renormalization). The defects of these light leaves maps
compute the diagrammatic character at $x$ of the indecomposable $B_w$ (see \cite[Definition 6.23]{EWGR4SB}), which agrees with the KL polynomial $P_{x,w}$ (up to renormalization).


\appendix
\newcommand{\qbinom}[2]{{\genfrac{[}{]}{0pt}{}{#1}{#2}}}
\newcommand{\TL}{\mathcal{T\!L}}

\section{The existence of Jones-Wenzl projectors (by Ben Webster)}

Consider the Temperley-Lieb category $\TL$ over $\Z[\delta]$ for the parameter $\delta$, and the Temperley-Lieb algebra $TL_n=\End_{\TL}(n,n)$. Let $I_{<n}$ be the ideal in $TL_n$ spanned
by all morphisms which factor through the object $k$ for $k<n$. Given a commutative ring $R$, and a homomorphism $\Z[\delta]\to R$ (that is, a choice of $\delta\in R$), we call an
idempotent $J_n\in TL_n\otimes_{\Z[\delta]}R$ a {\em Jones-Wenzl projector} if $J_n I_{<n} = I_{<n} J_n = 0$ and $1 - J_n \in I_{<n}$.

\begin{prop} For any $\delta\in R$, there is at most one Jones-Wenzl projector. \end{prop}

\begin{proof} Assume that $J_n$ and $J_n'$ are two JW projectors. Then $J_n J_n' = J_n$, since $J_n' \equiv 1$ modulo $I_{<n}$. Similarly, $J_n J_n' = J_n'$. Thus $J_n =
J_n'$. \end{proof}

Thus we can speak of ``the'' Jones-Wenzl projector. The question we will wish to consider is when this projector exists and when it does not. For example, in $TL_2$, one can easily check
that the JW projector exists if and only if $\delta$ is a unit. For higher ranks, the existence question is more subtle, but still easy to resolve. As noted earlier, we can define unique
polynomials in $\delta$ that are sent to the quantum integers $[k]$ or quantum binomial coefficients $\qbinom{n}{k}$ under the specialization $\delta\mapsto q+q^{-1}$.

\begin{thm}\label{thm:JW-exist} The Jones-Wenzl projector exists over $R$ if and only if the quantum binomial coefficient $\qbinom nk$ is invertible in $R$ for all $k<n$. \end{thm}

 Let us give an outline of the proof, which uses the representation theory of a $\Z[q,q^{-1}]$-linear form $U$ of the quantum group $U_q(\sl_2)$, and of algebras $U'$ obtained
from this form by base change. One proves that (after base change) $\TL$ describes the morphisms between tensor products of the standard representation $V$ of $U'$. { For instance, this describes the image of $I_{<n} \subset \End(V^{\ot n})$ as the sum of the images of all maps from $V^{\ot k}$ to $V^{\ot n}$, for $k < n$.}
A Jones-Wenzl projector exists if and only if there is a
decomposition of $V^{\ot n}\cong \ker(I_{<n})\oplus
\operatorname{im}(I_{<n})$, in which case $J_n$ is the idempotent
projecting to $ \ker(I_{<n})$.  

The kernel $\ker(I_{<n})$ can be written as the image of a map $W(n) \to V^{\ot n} $ from the Weyl module of highest weight $n$, and $\operatorname{im}(I_{<n})$ as the kernel of a map to
the dual Weyl module $W^*(n)$. Consequently, there is a Jones-Wenzl projector if and only if the composition $W(n) \to V^{\ot n} \to W^*(n)$ is an isomorphism. By explicit computation, this
map is an isomorphism if and only if $\qbinom nk$ is invertible in $R$ for all $k < n$.

Unfortunately, while the literature does contain many of the results above when $q$ is generic or even a root of unity, it is harder to find statements which hold for arbitrary commutative
$\Z[q,q^{-1}]$ algebras. The paper \cite{DPS} of Du, Parshall, and
Scott studies Schur-Weyl duality in this generality, and contains
proofs of many of the desired statements. The  identification of $\operatorname{im}(I_{<n})$ as the
kernel of a map to the dual Weyl module $W^*(n)$ does not seem to appear, however, and its proof requires some development. Since the framework will be developed anyway, we give easier
proofs of some of the facts that could be quoted from \cite{DPS} as well. The techniques should be familiar to the experts. The final argument, that a Jones-Wenzl projector exists if and
only if the natural map $W(n) \to W^*(n)$ is an isomorphism, seems to be new.

We first justify passage to the base ring
  $\Z[q,q^{-1}]$. Let \[R'=R[q,q^{-1}]/(\delta-q-q^{-1}).\] Note that $R$ injects in $R'$ as the fixed points of the bar involution
sending $q\mapsto q^{-1}$ and fixing $R$. The Temperley-Lieb algebra $TL_n\otimes_{\Z[\d]}R'$ has an induced bar involution preserving the diagram basis whose fixed points are
$TL_n\otimes_{\Z[\delta]}R$. The uniqueness of $J_n$ guarantees that if it exists over $R'$ then it is preserved by the bar involution, and thus exists over $R$. Also, $\qbinom nk$ is
invertible in $R$ if and only if it is invertible in $R'$.

Let $U=U_q(\mathfrak{sl}_2)$ be the algebra generated over $A=\Z[q,q^{-1}]$ by $E^{(i)}$, $F^{(i)}$, and $K$ under the usual relations. We make $A$ a $\Z[\d]$-algebra by setting
$\delta=q+q^{-1}$. Note that a homomorphism $\Z[\delta]\to R$ induces
a unique map $A\to R'$ sending $q\to q$.  We let $U' = U \ot_A R'$, and for any $U$-module $M$ which is
free as an $A$-module, we write $M'$ for $M \ot_A R'$.

Consider the standard representation on $V=A^2$ via the matrices
\[E\mapsto
\begin{bmatrix}
  0 & 1\\
  0 & 0
\end{bmatrix}\qquad F\mapsto
\begin{bmatrix}
  0 & 0\\
  1 & 0
\end{bmatrix}
\qquad K\mapsto
\begin{bmatrix}
  q & 0\\
  0 & q^{-1}
\end{bmatrix}.
\]
The algebra $U$ is a Hopf algebra, with the coproduct
\begin{equation} \Delta(E^{(m)})=\sum_{r+p=m}q^{pr}E^{(r)}\otimes
  E^{(p)}K^{r}\qquad \Delta(F^{(m)})=\sum_{r+p=m} q^{-pr} K^{-p} F^{(r)}\otimes F^{(p)}.\label{eq:coproduct}
\end{equation}
Thus, there is an induced $U$-module structure on $M\otimes_AN$ for any $U$-modules $M,N$, and similarly for the base change to any $A$-algebra. In particular, we can also consider the
tensor power $V^{\ot n}$ (tensor product over $A$), and its base change $(V^{\ot n})'$.

Let $W(k)$ be the Weyl module over $A$, that is, the free $A$-module spanned by a highest weight vector $v_k$ of weight $k$, and its images under divided powers $F^{(i)}v_k$ for $0 \le i
\le k$, with $F^{(k+1)}v_k=0$. The algebra $U$ acts via
\[E^{(j)}F^{(i)}v_k=\qbinom{k-i+j}{j}F^{(i-j)}v_k \qquad F^{(j)}F^{(i)}v_k=\qbinom{j+i}{j}F^{(i+j)}v_k.\]  
There is a duality on $U$ modules which are free of finite rank over $A$ by considering the action on $M^*:=\Hom_A(M,A)$ induced by the antipode.
We let $W^*(k)$ be the dual Weyl module; this is spanned by the dual basis $\{w_{k-2p}\}$ for ${0\leq p \leq k}$ to the basis $\{F^{(p)}v_k\}$.

Note that $W(k)$ is also the quotient of $U$ by the left ideal generated by $E^{(m)}$ for $m > 0$, by $F^{(p)}$ for $p > k$, and by $K - q^k$. Thus, the following lemma is
obvious.

\begin{lemma}\label{lem:weyl-universal} The Weyl module $W(k)$ has the universal property that $\Hom_{U }(W(k),M)$ is canonically isomorphic to the set of vectors in the module $M$ of
weight $k$ killed by $E^{(m)}$ for $m>0$ and $F^{(p)}$ for $p>k$. Dually, the module $W^*(k)$ also has a universal property in the category of $U$-modules with underlying $A$-module free;
$\Hom_{U }(M,W(k)^*)$ is canonically isomorphic to space of $A$-homomorphisms $M\to A$ of weight $k$ killed by $E^{m}$ for $m>0$ and $F^{(p)}$ for $p>k$. \end{lemma}


The following lemma is standard (for example, see \cite[Proposition 5.4--5]{DPS}):

\begin{lemma} \label{lemma:stdcostdhom}
For all $k,l \ge 0$, we have 
\begin{align*} \Hom_{U' }(W(k)',W^*(l)')&= \begin{cases} R'&k=l\\ 0& k\neq l \end{cases} .\\ \Ext^1_{U' }(W(k)',W^*(l)')&=0. \end{align*}
\end{lemma}


\begin{defn} We say that a $U'$-module $M'$ is {\em Weyl filtered} if it possesses a filtration $L_0=0\subset L_1\subset L_2\subset \cdots \subset L_r=M'$ with $L_{j}/L_{j-1}\cong W(p_j)'$.
We say that $M'$ is {\em dual Weyl filtered} if it possesses such a filtration with $L_{j}/L_{j-1}\cong W^*(q_j)'$. We say it is {\em tilting} if is both Weyl filtered and dual Weyl
filtered. \end{defn} Note that these properties are preserved under base change, by the freeness of $W(k)$ over $A$.


\begin{lemma} The module $V^{\ot n}$ is tilting. So is $(V^{\ot n})'$. \end{lemma}

\begin{proof} First, note that $V^{\ot n}$ is self-dual, since $V$ is as well. Thus, it suffices to show that $V^{\ot n}$ has a Weyl filtration, or more generally that if $N$ has a
Weyl filtration, $N\otimes V$ does as well.

To see this, it suffices to show that for any Weyl module $W(k)$, we have a short exact sequence \[0\to W(k+1)\to W(k)\otimes V\to W(k-1)\to 0.\] Since $V=W(1)$, we denote its highest
weight vector by $v_1$. The inclusion $W(k+1)\hookrightarrow W(k)\otimes V$ sends
\[F^{(i)}v_{k+1}\mapsto F^{(i)}(v_k\otimes v_1)=F^{(i)}v_k\otimes v_1 +q^{-k+i-1}F^{(i-1)}v_k\otimes Fv_1.\]
The quotient module $Q$ has a basis as a free $A$-module given by the images of the vectors $F^{(p)}v_k\otimes Fv_1$ for $p=0,\dots, k-1$.
Since $Q$ is generated by the highest weight vector $v_k\otimes Fv_1$, there is a map $W(k-1)\to Q$, which is a surjective map between free $A$-modules of rank $k$ and thus an isomorphism.

The proof for $(V^{\ot n})'$ is identical.
\end{proof}

In particular, the multiplicities of (dual) Weyl modules in the (dual)
Weyl filtration on $V^{\ot n}$ agree with the multiplicities in the semisimple case (i.e. with base ring $R'=\QM(q)$). For another
proof of this lemma, see \cite[Proposition 5.4]{DPS}.

We can define a functor $\rho \colon \TL \to U\operatorname{-mod}$ which sends $n\mapsto V^{\ot n}$. This functor will be monoidal, so we need only specify the image of the cup
$\iota\colon A\to T_2$ and the cap $\epsilon\colon T_2 \to A$. These are given by the unique homomorphisms such that
\begin{equation}
\iota(1)=q^{-1}Fv_1\otimes v_1-v_1\otimes Fv_1 \qquad \epsilon(-Fv_1\otimes v_1)=\epsilon(q^{-1}v_1\otimes Fv_1)=1.\label{eq:cup-cap}
\end{equation}
The existence of this functor is a standard result due (in different form) to Temperley and Lieb's original paper \cite{TemLie}.  Obviously, we can extend scalars to obtain a functor
$\rho' \co \TL\otimes_{\Z[\d]} R'\to U'\operatorname{-mod}$.

\begin{lemma} The functor $\rho'$ is fully faithful. \end{lemma}

This lemma can be compared to \cite[Theorem 6.2]{DPS}. 

\begin{proof} Using duality, it suffices to prove that the induced map from $\Hom_{\TL}(0,n)\to \Hom_{U' }(R',(V^{\ot n})')$ is an isomorphism. By localization and Nakayama's lemma, it suffices
to prove this isomorphism after base change to any field $\Bbbk$.The dimension of $\Hom_{U' }(R', (V^{\ot n})')$ can be computed using Lemma \ref{lemma:stdcostdhom}, and it agrees
with the dimension of $\Hom_{\TL}(0,n)$; it is the $n$th Catalan number. Thus, it suffices to show that $\rho \ot \Bbbk$ is injective, that is, that the vectors $v_C$ attached to different
crossingless matchings $C$ by \eqref{eq:cup-cap} are linearly independent.

For a fixed $C$, we let $\epsilon_1,\cdots, \epsilon_n$ be the sequence of $n$ elements of $\{1,0\}$ where we put a $1$ over the left end of a cup and $0$ over the right end (so we have
$(1,1,0,0)$ for two nested cups, and $(1,0,1,0)$ for unnested). We have $v_C =q^{-n}F^{\epsilon_1}v_1\otimes \cdots \otimes F^{\epsilon_n}v_1+\cdots$ where the other terms correspond to
words in $\{1,0\}$ which are smaller in lexicographic order. This shows that no multiple $av_C$ be written in terms of $v_{C'}$ for $C'<C$ in lexicographic order. Thus, by
upper-triangularity, the vectors $v_C$ are linearly independent.
This completes the proof. \end{proof}

Given a $U$ module $M$, let $M_p$ denote its $p$-th weight space. Let $M[<n]$ denote the maximal submodule whose weight spaces for $p \geq n$ are zero.

\begin{prop} Assume that $M$ is a $U$ module with a dual Weyl filtration. Then the sum of the images of all maps $V^{\ot k}\to M$ for $k<n$ is precisely $M[<n]$. Furthermore, if $N$ is Weyl
filtered, then any map $N\to M[<n]$ is a sum of maps factoring through
$V^{\ot k}$ for $k<n$. The same result holds for $U'$ modules. \end{prop}

\begin{proof} We prove the result for $U$; the proof for $U'$ is identical. Assume $X$ is a submodule of $M$ with all weight spaces for $p\geq k$ trivial. We wish to show that
$X$ is in the sum of the images of maps $V^{\ot k} \to M$ for $k<n$. Let $Y$ be a submodule with a dual Weyl filtration $L_0=0\subset L_1\subset L_2\subset \cdots \subset L_r=Y$ satisfying $X
\subset Y \subset M$. As before, we set $L_i / L_{i-1} = W^*(q_i)$. We
choose $Y$ so as to minimize the length $r$, and prove the result by induction on $r$.

Let $p$ be the maximal weight of $Y$. Let $Q$ be any quotient of the $A$-module $Y_p$ which is free as an $A$-module. By the universal property, we have a map $Y\to Q
\boxtimes_{A} W^*(p)$. The intersections of $Y_p$ with $L_i$ form a filtration of $Y_p$ with free subquotients. Let $K_i$ be the kernel of the map $q_i \co Y \to (Y_p / Y_p \cap L_{i-1})
\boxtimes_A W^*(p)$. We claim that $K_i$ is dual Weyl filtered, and has shorter length than $Y$. After all, $L_j \subset K_i$ unless $q_j = p$, and when $q_j = p$, $L_j \subset K_i$
precisely when $j < i$. Moreover, $L_i \cap K_i = L_{i-1} \cap K_i$. So we have \[(L_{j}\cap K_{i})/(L_{j-1}\cap K_{i})\cong \begin{cases} L_{j}/L_{j-1} &q_j\neq p\textrm{ or } j<i\\ 0 &
\textrm{else}. \end{cases} \] Thus $L_j \cap K_i$ is a dual Weyl filtration of $K_i$.

Suppose that $p \ge n$. Then the induced map $X \to Y_p \boxtimes_A W^*(p)$ is zero, so $X \subset K_r$. This contradicts the minimality of $Y$, so we can assume $p < n$.


Let $q$ denote the map $Y \to Y_p \boxtimes_A W^*(p)$, and let $K$ be its kernel. Then $K$ has a dual Weyl filtration, and has maximal weight $<p$. Let $\pi_Y$ denote the surjective map
$\pi \co Y_p\boxtimes_A V^{\ot p} \to Y_p\boxtimes_A W^*(p)$. By Lemma \ref{lemma:stdcostdhom}, $\Ext^1(Y_p\boxtimes_A V^{\ot p} ,K)=0$. Thus, we can lift $\pi_Y$ to a map $\tilde{\pi}_Y
\co Y_p\boxtimes_A V^{\ot p} \to Y$. The image of $\tilde{\pi}_Y$ together with $K$ spans $Y$. By induction, $K$ is spanned by the images of maps from $V^{\ot k}$ with $k<p<n$. Thus, we
have proved that the images of maps from $V^{\ot k}$ for $k<n$ span $M[<n]$.

Now let $\phi\colon N\to M[<n]$ be a map from a Weyl filtered module. Let $p$ be the maximal weight space in $N$ on which this map is non-zero; the image of our map lies in $M[\le p]$. We
will prove the result by induction on $p$. Consider the projection $q\colon M[\leq p]\to M[\leq p]_p\boxtimes_A W^*(p)$. As above, we also have a map \[\pi_{M[\le p]} \colon M[\leq
p]_p\boxtimes_A V^{\ot p} \to M[\leq p]_p\boxtimes_A W^*(p),\] and the same Ext-vanishing argument shows that we can define a lift $\tilde{\pi}_{M[\le p]} \colon M[\leq p]_p\boxtimes_A V^{\ot p} \to
M[\leq p]$ such that $\pi =q \circ \tilde{\pi}$ as shown in \eqref{eq:lift-diagram} below.

The composition $q\circ \phi $ is a map from a Weyl filtered module to $M[\leq p]_p\boxtimes_A W(p)^*$. Note that the kernel $J$ of the map $\pi$ is also dual Weyl filtered so
$\Ext^1(N,J)=0$. Thus, the map $q\circ \phi $ can be lifted to a map $\eta\colon N\to M[\leq p]_p\boxtimes_A V^{\ot p}$ which satisfies $q\circ \phi=\pi \circ \eta$.
\begin{equation} \tikz[->,very thick,baseline]{
\matrix[row sep=15mm,column sep=20mm,ampersand replacement=\&]{
	\node (a) {$N$};
	\& \node (c) {$M[\leq p]_p\boxtimes_A V^{\ot p}$};
	\\ \node (b) {$M[\leq p]$};
	\& \node (d) {$M[\leq p]_p\boxtimes_A W^*(p)$};
	\\ };
	\draw (a) --node[left,midway]{$\phi$} (b);
	\draw[dashed] (a) -- node[midway,above]{$\eta$} (c);
	\draw (c) --node[right,midway]{$\pi$} (d);
	\draw (b)-- node[midway,below]{$q$} (d);
	\draw[dashed] (c)-- node[midway,below right]{$\tilde{\pi}$} (b);
}\label{eq:lift-diagram} \end{equation}
Thus, the map $\phi'=\phi- \pi \circ \eta $ lands in $M[<p]$. By induction, $\phi'$ is a sum of maps that factor through $V^{\ot k}$ for $k<p$, so this completes the proof.
\end{proof}
Note that the ideal $I_{<n}$ is precisely the set of maps $V^{\ot
  n}\to V^{\ot n}$ which factors through $V^{\ot k}$ for $k<n$.  Thus,
applied to $V^{\ot n}$, this shows that the image of $I_{<n}$ is
precisely $V^{\ot n}[<n]$.

\begin{proof}[Proof of Theorem \ref{thm:JW-exist}] If the Jones-Wenzl projector $J_n$ exists, then we can consider its action on $T = (V^{\ot n})'$. The idempotent $1 - J_n$ acts as the identity on
$T[<n]$, and since it lies in $I_{<n}$, its image is also contained in $T[<n]$. Thus, $1 - J_n$ is projection to $T[<n]$. Let $X$ denote its complement, the image of $J_n$. Since
there is a short exact sequence \[0\to T[<n]\to T \to W^*(n)'\to 0,\] we have that $X\cong W^*(n)'$. On the other hand, since $J_n$ acts by the identity on the $n$-weight space, the
induced map $W(n)' \to X$ must be an isomorphism as well. Thus, the natural map $W(n)' \to T \to W^*(n)'$ must be an isomorphism.

Under this map $F^{(k)}v_n$ is sent to the map
\[F^{(k)}w_n = w_n\circ (-1)^kq^{k(k-1)/2}K^{-k}E^{(k)}\colon W(n)'\to R',\]
since $S(F^{(k)})=-q^{k(k-1)/2}K^{-k}E^{(k)}$. Calculating, this is given by \[(-1)^kq^{k(k-1)/2-nk}\qbinom nk w_{n-2k}.\]
This spans the $n-2k$ weight space (and thus the map is an isomorphism) if and only if $\qbinom nk$ is a unit in $R'$.

On the other hand, assume that $\qbinom nk$ is always a unit. In this case, the induced map $W(n)'\to T \to W^*(n)'$ is an isomorphism by the same calculation. Thus, the image of $W(n)'$
is a complementary submodule to $T[<n]$ inside $T$. The projection to $T[<n]$ must be in the ideal $I_{<n}$, so the complementary projection to $W(n)'$ must be the image of a
Jones-Wenzl projector under $\rho'$. Thus $J_n$ must exist. \end{proof}


\bibliographystyle{plain}
\bibliography{everyone}{}

\vspace{0.1in}
 
\noindent
{\textsl \small Ben Elias, Department of Mathematics, University of Oregon, Eugene, OR, USA}

\noindent 
{\tt \small email: belias@uoregon.edu}

The first author was supported by NSF grant DMS-1103862.

\vspace{0.1in}
 
\noindent
{\textsl \small Nicolas Libedinsky, Department of Mathematics, Universidad de Chile, Santiago, Chile}

\noindent 
{\tt \small email: nlibedinsky@gmail.com}

The second author was supported by Fondecyt iniciacion 11121118.

\end{document}